\documentclass[10pt, reqno]{amsart}
\numberwithin{equation}{section}
\usepackage{amssymb, color, cite}
\usepackage{hyperref}

\newcommand{\qtq}[1]{\quad\text{#1}\quad}

\newcommand{\R}{\mathbb{R}}
\newcommand{\C}{\mathbb{C}}

\newcommand{\eps}{\varepsilon}

\newtheorem{theorem}{Theorem}[section]

\newtheorem{lemma}[theorem]{Lemma}

\newtheorem{corollary}[theorem]{Corollary}

\newtheorem{proposition}[theorem]{Proposition}

\theoremstyle{definition}

\theoremstyle{remark}

\begin{document}

\title[NLS Recovery]{Recovery of a spatially-dependent coefficient from the NLS scattering map}
\author{Jason Murphy}
\address{Department of Mathematics \& Statistics, Missouri S\&T}
\email{jason.murphy@mst.edu}

\begin{abstract} We follow up on work of Strauss, Weder, and Watanabe concerning scattering and inverse scattering for nonlinear Schr\"odinger equations with nonlinearities of the form $\alpha(x)|u|^p u$. 
\end{abstract}

\maketitle

\section{Introduction}

This note is intended to follow up on some previous works \cite{Strauss, Weder1, Weder3, Watanabe} concerning nonlinear Schr\"odinger equations of the form
\begin{equation}\label{nls}
(i\partial_t + \Delta)u = \alpha(x)|u|^p u,\quad (t,x)\in\R\times\R^d. 
\end{equation}
These works considered the problems of (i) scattering for a suitable class of data and (ii) the determination of the nonlinearity from knowledge of the scattering map. 

In \cite{Strauss}, Strauss established a small-data scattering theory for \eqref{nls} in $H^s$, with $p$ an integer in the mass-supercritical regime (i.e. $p>\tfrac{4}{d}$), $s$ sufficiently large, and $\alpha\in W^{s,\infty}$.  The need for high regularity was essentially a consequence of estimating solutions using the $L^\infty$-norm, with the nonlinear term in the Duhamel formula being estimated directly via the dispersive estimate.  After establishing the small-data scattering theory, Strauss further demonstrated that knowledge of the scattering map suffices to determine integrals of the form
\[
\int_\R \langle \alpha |e^{it\Delta}\varphi|^p e^{it\Delta}\varphi,e^{it\Delta}\psi\rangle\,dt,
\]
which may be used to recover the coefficient $\alpha$ pointwise.  The result of \cite{Strauss} was extended in works of Weder \cite{Weder1, Weder3}, who considered equations of the form
\[
(i\partial_t + \Delta)u = V_0(x) u + \sum_{k=k_0}^\infty V_k(x) |u|^{2k}u
\]
and used the small-data scattering map to determine the functions $V_k$, including the potential $V_0$. The constant $k_0$ was chosen so that the lowest power in the nonlinearity exceeded the so-called \emph{Strauss exponent} (allowing for some mass-subcritical nonlinearities); scattering was obtained in $H^s$ for some integer $s>\tfrac{d}{2}-1$ ($s=1$ in $d=1$); and the coefficients were assumed to satisfy $V_k\in W^{s,\infty}$.  Weder also relied primarily on dispersive estimates (of the type obtained in \cite{Weder6}) to estimate the nonlinear terms.

In \cite{Watanabe}, Watanabe established a large-data $H^1$ scattering theory for \eqref{nls} in the $3d$ intercritical regime ($\tfrac{4}{d}<p<\tfrac{4}{d-2}$) for \eqref{nls} with decaying coefficients $\alpha$ satisfying a repulsivity condition.  He then adapted techniques from \cite{EW} to the setting of \eqref{nls}, evaluating the scattering map on data of the form $e^{i\rho\theta\cdot x}\varphi$ with $\rho\gg 1$ to determine integrals of the form 
\[
\int_\R \langle \alpha(\cdot+2t\theta)|\varphi|^p \varphi,\psi\rangle\,dt,
\]
which then determine the X-ray transform of $\alpha$. 

Our first contribution is to revisit the approach of \cite{Strauss, Weder1, Weder3} and to lower the regularity assumptions by utilizing Strichartz estimates instead of directly using the dispersive estimate.  This is similar to the approach taken in the related work \cite{CarlesGallagher}, although in this latter work the authors were primarily concerned with the analyticity of the scattering operator, and correspondingly the results concerning NLS were restricted to the case $p\in 2\mathbb{N}$ and $\alpha$ constant.
We further extend the work of \cite{Strauss, Weder1, Weder6} by establishing analogous results in the full mass-subcritical regime.

Our second contribution is to extend the results of \cite{Watanabe} to the mass-critical and mass-subcritical regime in dimensions $d\geq 3$.  We follow essentially the same strategy to recover $\alpha$ from the scattering map.  In contrast to \cite{Watanabe}, however, we formulate the original scattering problem as a small-data problem in a suitable weighted space.  This construction directly provides us with the key estimate needed to control the nonlinear error term in the reconstruction argument.  The formulation as a small-data problem also removes the need for any sign or repulsivity conditions on the coefficient.  After presenting our approach, we will also discuss some challenges associated to this problem in the mass-supercritical regime. 

Our main results appear below as follows:
\begin{itemize}
\item Theorem~\ref{T:intercritical} -  small-data scattering in $H^1$ in the intercritical case;
\item Theorem~\ref{T:subcritical} - small-data scattering in $L^2$ in the mass-critical and mass-subcritical case; 
\item Theorem~\ref{T:subcritical2} - scattering in $L^2$ in the mass-critical and mass-subcritical case with boosted data;
\item Theorem~\ref{T:recovery1} and Corollary~\ref{C1} - recovery of the nonlinearity from the scattering map in the setting of Theorem~\ref{T:intercritical} and Theorem~\ref{T:subcritical}; 
\item Theorem~\ref{T:recovery2} and Corollary~\ref{C2} - recovery of the nonlinearity from the scattering map in the setting of Theorem~\ref{T:subcritical2}. 
\end{itemize}

Our results fit in the broader context of the recovery of the nonlinear terms from scattering data for nonlinear dispersive equations.  For some further results of this type (primarily in the NLS setting), we refer the reader to \cite{SBUW, CarlesGallagher, MorStr, Weder0, Weder6, Weder3, Weder4, Weder1, Sasaki2, Sasaki, KMV}.  We also mention the related works \cite{SBS2, HMG}, which considered the recovery of spatially-dependent coefficients in the nonlinearity using particular solutions rather than the scattering map.


\subsection*{Acknowledgments} The author was supported in part by NSF DMS-2137217.  I am grateful to Rowan Killip, Monica Visan, Michiyuki Watanabe, and John Singler for helpful discussions related to this work. 

\section{Preliminaries}\label{S:Prelim}

We write $A\lesssim B$ to denote the inequality $A\leq CB$ for some $C>0$.  We denote dependence on parameters by subscripts, e.g. $A\lesssim_\ell B$ means $A\leq CB$ for some $C=C(\ell)>0$.  We utilize the standard space-time Lebesgue spaces, i.e.
\[
\|u\|_{L_t^q L_x^r(I\times\R^d)} = \bigl\| \|u(t,\cdot)\|_{L_x^r(\R^d)}\bigr\|_{L_t^q(I)}. 
\]
We use $H_x^{1,r}$ for the Sobolev space with norm
\[
\|u\|_{H^{1,r}}=\|u\|_{L^r}+\|\nabla u\|_{L^r}. 
\]
We write $q'$ for the H\"older dual of $q$.  The Fourier multiplier operator with symbol $m$ is denoted by $m(i\nabla)$.  Finally, we write $\langle x\rangle=\sqrt{1+|x|^2}$. 

The free Schr\"odinger group is denoted $e^{it\Delta}$.  We have the following identity for boosted initial data: for $v\in\R^d$, 
\begin{equation}\label{boost}
[e^{it\Delta}e^{iv\cdot x}\varphi](x) = e^{-i|v|^2 t}e^{iv\cdot x}[e^{it\Delta}\varphi](x-2tv).
\end{equation}
The Schr\"odinger group also obeys the following dispersive estimates
\[
\|e^{it\Delta}\varphi\|_{L^\infty} \lesssim |t|^{-\frac{d}{2}} \|\varphi\|_{L^1},\quad \| e^{it\Delta}\varphi\|_{L^2} = \|\varphi\|_{L^2},
\]
which (by interpolation) yield the following (Lorentz-improved) estimates
\begin{equation}\label{disp-lorentz}
\|e^{it\Delta} \varphi\|_{L^{r,2}} \lesssim |t|^{-(\frac{d}{2}-\frac{d}{r})}\|\varphi\|_{L^{r',2}},\quad 2\leq r<\infty.
\end{equation}

We will also make use of the standard Strichartz estimates for $e^{it\Delta}$.  We call a pair $(q,r)$ \emph{admissible} if $2\leq q,r\leq\infty$, $\tfrac{2}{q}+\tfrac{d}{r}=\tfrac{d}{2}$, and $(q,r,d)\neq(2,\infty,2)$.

\begin{theorem}[Strichartz estimates, \cite{GV, KT, Strichartz}]  For any admissible $(q,r)$ and any $\varphi\in L^2$, we have
\[
\|e^{it\Delta}\varphi\|_{L_t^q L_x^r(\R\times\R^d)}\lesssim \|\varphi\|_{L^2}.
\]
For any admissible $(q,r)$ and $(\tilde q,\tilde r)$ and $F\in L_t^{\tilde q'}L_x^{\tilde r'}(\R\times\R^d)$, we have
\[
\biggl\|\int_{-\infty}^t e^{i(t-s)\Delta}F(s)\,ds\biggr\|_{L_t^q L_x^r(\R\times\R^d)}\lesssim \|F\|_{L_t^{\tilde q'} L_x^{\tilde r'}(\R\times\R^d)}. 
\]
\end{theorem}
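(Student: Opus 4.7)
The result is a classical theorem, and my plan is to prove it by the $TT^*$ method combined with the dispersive estimate. Writing $T\varphi(t)=e^{it\Delta}\varphi$, the homogeneous estimate $\|T\varphi\|_{L^q_t L^r_x}\lesssim \|\varphi\|_{L^2}$ is equivalent by duality to the boundedness of $TT^*F(t)=\int_\R e^{i(t-s)\Delta}F(s)\,ds$ from $L^{q'}_t L^{r'}_x$ to $L^q_t L^r_x$. I would first dispatch the non-endpoint range in this form, then treat the endpoint separately, and finally pass from the symmetric inhomogeneous estimate to the retarded one by a Christ--Kiselev argument.

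For the non-endpoint case, I would substitute the Lorentz-improved dispersive bound \eqref{disp-lorentz} into $TT^*$, which gives
\[
\|e^{i(t-s)\Delta}F(s)\|_{L^{r,2}_x}\lesssim |t-s|^{-(\frac{d}{2}-\frac{d}{r})}\|F(s)\|_{L^{r',2}_x}.
\]
Because the admissibility condition $\tfrac{2}{q}+\tfrac{d}{r}=\tfrac{d}{2}$ turns this into a Riesz potential of order $\tfrac{2}{q}$ on the time axis, Hardy--Littlewood--Sobolev in time closes the estimate whenever $q>2$, with the Lorentz refinement used to recover the genuine $L^r_x$ norm on the output side. The point $(q,r)=(2,\tfrac{2d}{d-2})$ in $d\geq 3$ is exactly where this HLS step breaks down; there I would follow Keel--Tao, decomposing $TT^*$ dyadically according to $|t-s|\sim 2^j$, proving off-diagonal bilinear estimates by interpolating between the $L^2\to L^2$ energy bound and the $L^1\to L^\infty$ dispersive bound, and then summing via real interpolation.

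The inhomogeneous estimate with a possibly different admissible pair $(\tilde q,\tilde r)$ follows by composing the homogeneous estimate with its adjoint. The symmetric version with $\int_\R$ is immediate, and to upgrade to the retarded integration $\int_{-\infty}^t$ I would invoke the Christ--Kiselev lemma, which applies whenever $q>\tilde q'$; the remaining double-endpoint inhomogeneous case is again handled directly inside the Keel--Tao bilinear scheme. The main obstacle throughout is the endpoint, where both the homogeneous and inhomogeneous bounds demand the bilinear dyadic machinery rather than a clean HLS application; everything else is soft duality and fractional integration.
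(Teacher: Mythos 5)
The paper states this theorem as a known classical result and cites \cite{GV, KT, Strichartz} without giving a proof, so there is no in-paper argument to compare against. Your outline --- $TT^*$ with the dispersive estimate and Hardy--Littlewood--Sobolev in time off the endpoint, the Keel--Tao bilinear dyadic decomposition at $(q,r)=(2,\tfrac{2d}{d-2})$ for $d\geq 3$, and the Christ--Kiselev lemma (valid since $q>\tilde q'$ away from the double endpoint) to pass from the untruncated to the retarded inhomogeneous estimate --- is precisely the standard proof from those references and is correct.
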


\subsection{Weighted estimate for boosted data} The following estimate concerning boosted solutions to the linear Schr\"odinger equation will play a key role in Theorem~\ref{T:subcritical2} and Theorem~\ref{T:recovery2} below.  The estimate is modeled closely after estimates appearing in \cite{Enss, EW, Watanabe}.  

Given $s\in[0,\tfrac{d}{2})$, we introduce the space $X^{s}(\R^d)$ via the norm
\begin{equation}\label{Xs}
\|\varphi\|_{X^s} = \|\langle x\rangle^s \varphi\|_{L^2} + \| |\nabla|^s \varphi\|_{L^{\frac{2d}{d+2s}}}. 
\end{equation}

\begin{proposition}\label{P:Enss} Let $q:\R^d\to\C$ satisfy $|q(x)|\lesssim\langle x\rangle^{-s}$ for some $s\in(0,\tfrac{d}{2})$.  Then
\[
\| q\,e^{it\Delta} e^{iv\cdot x}\varphi\|_{L^2} \lesssim \langle tv\rangle^{-s}\|\varphi\|_{X^{s}}\qtq{uniformly in}t\in\R.
\]
\end{proposition}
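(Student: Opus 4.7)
The plan is to reduce via the boost identity \eqref{boost} to an estimate on the free propagator alone, and then to perform a spatial decomposition that exploits both factors of $\|\cdot\|_{X^s}$. Setting $w:=2tv$, \eqref{boost} rewrites $\|q\,e^{it\Delta}e^{iv\cdot x}\varphi\|_{L^2}$ as $\|q(\cdot+w)\,e^{it\Delta}\varphi\|_{L^2}$. The regime $\langle w\rangle\lesssim 1$ follows at once from $|q|\lesssim 1$ together with the sharp Hardy--Littlewood--Sobolev bound $\|\varphi\|_{L^2}\lesssim\||\nabla|^s\varphi\|_{L^{r'}}\le\|\varphi\|_{X^s}$, where $r'=\frac{2d}{d+2s}$.

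For $|w|\gg 1$, I would split the $y$-integration at the scale $|w|/2$. On the near region $\{|y|\le|w|/2\}$ the triangle inequality forces $|y+w|\ge|w|/2$, whence $|q(y+w)|\lesssim\langle w\rangle^{-s}$ and this piece contributes $\lesssim\langle w\rangle^{-s}\|\varphi\|_{L^2}\lesssim\langle w\rangle^{-s}\|\varphi\|_{X^s}$. On the far region $\{|y|>|w|/2\}$ I further split $\varphi=\varphi_1+\varphi_2$ with $\varphi_2:=1_{|z|>|w|/4}\,\varphi$; the piece $\varphi_2$ is handled immediately via
\[
\|e^{it\Delta}\varphi_2\|_{L^2}=\|\varphi_2\|_{L^2}\lesssim |w|^{-s}\|\langle z\rangle^s\varphi\|_{L^2}\lesssim\langle w\rangle^{-s}\|\varphi\|_{X^s},
\]
using the weighted-$L^2$ factor of $X^s$.

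The remaining piece, involving the compactly supported $\varphi_1$ with $\supp\varphi_1\subset\{|z|\le|w|/4\}$, is the heart of the argument. A change of variables in the Schr\"odinger kernel yields the identity
\[
\|1_{|y|>R}\,e^{it\Delta}\varphi_1\|_{L^2}=\|P_{>R/(2|t|)}(e^{i|\cdot|^2/(4t)}\varphi_1)\|_{L^2},
\]
where $P_{>K}$ is the sharp frequency cutoff. Taking $R=|w|/2=|tv|$ gives $R/(2|t|)=|v|/2$, so Bernstein reduces matters to bounding $\||\nabla|^s(e^{i|\cdot|^2/(4t)}\varphi_1)\|_{L^2}$ by $|t|^{-s}\|\varphi\|_{X^s}$ up to constants. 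A fractional Leibniz estimate applied to the chirp $e^{i|x|^2/(4t)}$ produces a term $\lesssim|t|^{-s}\||x|^s\varphi_1\|_{L^2}$, absorbed by the weighted factor of $X^s$, plus a term involving fractional derivatives of $\varphi_1$, which is controlled using the Lorentz-improved dispersive estimate \eqref{disp-lorentz} and the $L^{r'}$ regularity of $|\nabla|^s\varphi$.

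The main obstacle will be carrying out this final step at arbitrary $s\in(0,\frac{d}{2})$: one must track sharp Lorentz exponents through the Bernstein and fractional-Leibniz arguments and verify that the two seminorms of $X^s$ really match the two resulting terms to produce the full $\langle tv\rangle^{-s}$ decay.
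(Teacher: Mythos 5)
Your reduction via \eqref{boost}, the treatment of the regime $\langle tv\rangle\lesssim 1$, and the near/far spatial splitting (with the far part of the \emph{data} absorbed by the weighted factor $\|\langle x\rangle^s\varphi\|_{L^2}$) all match the structure of the paper's argument. The gap is in the remaining ``mismatch'' piece: output localized to $|y|>|tv|$ with input $\varphi_1$ supported in $|z|\lesssim|tv|$. Your lens-transform identity is correct, but the step ``Bernstein reduces matters to bounding $\||\nabla|^s(e^{i|\cdot|^2/(4t)}\varphi_1)\|_{L^2}$ by $|t|^{-s}\|\varphi\|_{X^s}$'' reduces the problem to a \emph{false} statement: for a fixed compactly supported Schwartz $\varphi$ and $t\to\infty$ (so that $\varphi_1=\varphi$ and the chirp tends to $1$ on the support), the left-hand side converges to $\||\nabla|^s\varphi\|_{L^2}>0$, while the right-hand side tends to $0$. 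The sharp cutoff $P_{>|v|/2}$ is doing essential work that Bernstein throws away: for the low-frequency content of $\varphi$ the chirped function lives at frequencies $\lesssim|v|/4$ and the projection is \emph{rapidly} small (this is a non-stationary phase fact, not a Bernstein fact), whereas for the high-frequency content Bernstein gives only $|v|^{-s}$ and the missing $|t|^{-s}$ must come from dispersion. Relatedly, the second fractional-Leibniz term is $\||\nabla|^s\varphi_1\|_{L^2}$, which carries no $t$-decay and is not controlled by $\|\varphi\|_{X^s}$ (Sobolev gives $\dot W^{s,\frac{2d}{d+2s}}\hookrightarrow L^2$, not $\hookrightarrow\dot H^s$); and once you have passed to the chirped function there is no free propagator left to which \eqref{disp-lorentz} could be applied. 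The fractional Leibniz rule against the chirp for $s>1$ is a further (acknowledged) technical obstacle.

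The fix is the paper's frequency decomposition at scale $N=\tfrac14|v|$, which is exactly what separates the two mechanisms your single Bernstein application conflates. High frequencies are handled globally (no spatial splitting) by H\"older in Lorentz spaces, the dispersive estimate \eqref{disp-lorentz} (supplying $|t|^{-s}$), and Bernstein (supplying $N^{-s}\sim|v|^{-s}$ and landing on $\||\nabla|^s\varphi\|_{L^{\frac{2d}{d+2s}}}$). For low frequencies, the mismatch piece is controlled by Lemma~\ref{L:Enss}: since $\mathrm{dist}(S,S')\geq 4N|t|$, the phase $x\cdot\xi-t|\xi|^2$ has no stationary points on $\{|\xi|\leq N\}$ for $|x|>R$, and integration by parts yields $O((1+R)^{-\ell})$ for all $\ell$ with $R\gtrsim|tv|$. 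If you want to keep your lens-transform formulation, you would still need to prove a statement of this non-stationary-phase type for $\|P_{>|v|/2}(e^{i|\cdot|^2/(4t)}P_{\leq N}\varphi_1)\|_{L^2}$; Bernstein alone cannot deliver it.
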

 
We begin with a mismatch-type estimate (also found in \cite{Enss, EW}).  

\begin{lemma}\label{L:Enss} Let $g\in C_c^\infty(\R^d)$ satisfy $\text{supp\,}g\subset\{|\xi|\leq N\}$ for some $N\geq 1$. Let $t\in\R$ and suppose $S,S'\subset\R^d$ are measurable sets satisfying 
\begin{equation}\label{dist}
\text{dist}(S,S')=R\geq 4N|t|.
\end{equation}
Then for any $\ell\geq 0$, 
\[
\|\chi_{S'} e^{it\Delta}g(i\nabla)\chi_S\|_{L^2\to L^2} \lesssim_{\ell,g} (1+R)^{-\ell}.
\]
The estimate is uniform in $t\in\R$. 
\end{lemma}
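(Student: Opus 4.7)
The plan is to bound the Schwartz kernel of $e^{it\Delta}g(i\nabla)$ pointwise via non-stationary phase, and then pass to operator norms using Schur's test. Writing
\[
K_t(x,y) = (2\pi)^{-d}\int_{\R^d} e^{i[(x-y)\cdot\xi - t|\xi|^2]}\,g(\xi)\,d\xi,
\]
one has for $x\in S'$, $y\in S$ the hypothesis $|x-y|\geq R\geq 4N|t|$, which will drive the decay.

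The phase $\phi_t(\xi)=(x-y)\cdot\xi - t|\xi|^2$ has gradient $\nabla_\xi\phi_t = (x-y)-2t\xi$. Since $\supp g\subset\{|\xi|\leq N\}$, one has $|2t\xi|\leq 2N|t|\leq |x-y|/2$, hence $|\nabla_\xi\phi_t|\geq |x-y|/2$ on the support of the integrand. I would then introduce $L=-i|\nabla_\xi\phi_t|^{-2}\nabla_\xi\phi_t\cdot\nabla_\xi$, which satisfies $L(e^{i\phi_t})=e^{i\phi_t}$, and integrate by parts $m$ times to get
\[
K_t(x,y)=(2\pi)^{-d}\int e^{i\phi_t(\xi)}\, (L^\ast)^m g(\xi)\,d\xi.
\]
Each application of $L^\ast$ produces either a factor of $\nabla g/|\nabla_\xi\phi_t|$ or a factor of $t$ (arising from $\nabla_\xi(\nabla_\xi\phi_t)=-2tI$) divided by $|\nabla_\xi\phi_t|^2$; all higher $\xi$-derivatives of $\nabla_\xi\phi_t$ vanish, so the bookkeeping remains finite. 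Using $|\nabla_\xi\phi_t|\gtrsim |x-y|$ together with $|t|\leq R/(4N)\leq|x-y|/(4N)$, each such factor is bounded by $|x-y|^{-1}$, with constants depending only on $g$ and $N$. Iterating yields the kernel bound
\[
|K_t(x,y)|\lesssim_{m,g} |x-y|^{-m}\qquad\text{for } x\in S',\ y\in S.
\]

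To pass to the operator norm, I would apply Schur's test. Choosing $m=\ell+d+1$,
\[
\sup_{x\in S'}\int_S |K_t(x,y)|\,dy \lesssim_{\ell,g}\int_{|z|\geq R}|z|^{-(\ell+d+1)}\,dz\lesssim R^{-(\ell+1)},
\]
and likewise for the dual integral, giving $\|\chi_{S'}e^{it\Delta}g(i\nabla)\chi_S\|_{L^2\to L^2}\lesssim_{\ell,g} R^{-\ell}$ for $R\geq 1$. For $R\leq 1$ the factor $(1+R)^{-\ell}$ is bounded below, so the claim reduces to the trivial bound $\|e^{it\Delta}g(i\nabla)\|_{L^2\to L^2}\leq \|g\|_{L^\infty}$. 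Combining the two regimes yields the stated $(1+R)^{-\ell}$ bound, uniformly in $t$.

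The only real obstacle is bookkeeping: because $\nabla_\xi\phi_t$ depends on both $x-y$ and $t\xi$, one must verify that the $|t|$ factors appearing from $\xi$-derivatives of $\nabla_\xi\phi_t$ are absorbed by the separation hypothesis $R\geq 4N|t|$. Once that is verified, the argument is a standard non-stationary-phase calculation.
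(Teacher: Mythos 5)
Your argument is correct and is essentially the paper's proof: both hinge on the observation that on the support of $g$ the phase gradient $(x-y)-2t\xi$ is bounded below by $\tfrac12|x-y|\geq\tfrac12 R$ thanks to $R\geq 4N|t|$, so non-stationary phase gives arbitrary polynomial decay of the kernel, and the small-$R$ regime is handled by the trivial $\|g\|_{L^\infty}$ bound. Your use of Schur's test on the convolution kernel $K_t(x,y)=\check m(x-y)$ restricted to $|x-y|\geq R$ is just a repackaging of the paper's bound $\|\chi_{S'}m(i\nabla)\chi_S\|_{L^2\to L^2}\lesssim\|\check m\|_{L^1(|x|>R)}$, so no substantive difference.
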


\begin{proof} We begin by observing that for a bounded continuous function $m$, we have
\[
\|\chi_{S'}m(i\nabla)\chi_S\|_{L^2\to L^2}\lesssim\min\{\|m\|_{L^\infty},\|\check m\|_{L^1(|x|>R)}\}. 
\]
The $L^\infty$ bound follows from Plancherel.  For the remaining estimate, one may proceed by expanding the definition of
\[
\|\chi_{S'}m(i\nabla)\chi_S f\|_{L^2}^2,\quad f\in L^2,
\]
and using the inequality 
\[
|f(y)|\,|f(z)|\leq\tfrac12[|f(y)|^2+|f(z)|^2]
\]
along with the assumption \eqref{dist} (see \cite[Lemma~2.1]{Enss}).  In the present setting, we take $m(\xi)=e^{-it|\xi|^2}g(\xi)$ and seek to estimate
\[
\check m(x)=\int e^{ix\xi-it|\xi|^2}g(\xi)\,d\xi,\quad |x|>R. 
\]
According to our assumptions, the phase has no stationary points, and hence repeated integration by parts leads to bounds of the form $C_\ell|x-2t\xi|^{-\ell}$ for arbitrary $\ell$.  In the present setting, we have
\[
|x-2t\xi|\geq |x|-2t|\xi|\geq \tfrac12|x|\geq\tfrac 12 R,
\]
and hence we obtain 
\[
\|\check m\|_{L^1(|x|>R)}\lesssim_\ell R^{-\ell}
\]
for any $\ell\geq 0$. 
\end{proof}

\begin{proof}[Proof of Proposition~\ref{P:Enss}] We let $v\in\R^d\backslash\{0\}$ and $s\in(0,\tfrac{d}{2})$.  By \eqref{boost}, it is enough to show that
\begin{equation}\label{PETS}
\|q(\cdot+2tv)e^{it\Delta}\varphi\|_{L^2} \lesssim_s \langle t v\rangle^{-s}\|\varphi\|_{X^{s}}.
\end{equation}

We split $\varphi$ into low and high frequencies via 
\[
\varphi=P_{\leq N}\varphi+P_{>N}\varphi,\quad N:=\tfrac14|v|. 
\]

We first estimate the low frequencies.  We set 
\[
S=\{|x|\leq\tfrac{1}{10}|tv|\}
\]
and use the triangle inequality to obtain
\begin{align}
\|q(\cdot+2tv)e^{it\Delta}P_{\leq N}\varphi\|_{L^2} & \leq
\|q(\cdot+2tv)e^{it\Delta}P_{\leq N}[1-\chi_S]\varphi\|_{L^2} \label{Eest1}\\
&\quad + \|[1-\chi_S(\cdot+2tv)]q(\cdot+2tv)e^{it\Delta}P_{\leq N}\chi_S\varphi\|_{L^2}\label{Eest2}\\
&\quad +  \|\chi_S(\cdot + 2tv)q(\cdot+2tv)e^{it\Delta}P_{\leq N}\chi_S\varphi\|_{L^2}.\label{Eest3} 
\end{align}

For \eqref{Eest1}, we estimate
\begin{align*}
\|q(\cdot+2tv)e^{it\Delta}P_{\leq N}[1-\chi_S]\varphi\|_{L^2} & \lesssim \|q\|_{L^\infty}\|\varphi\|_{L^2(|x|>\frac{1}{10}|tv|)} \\
&  \lesssim \langle tv\rangle^{-s}\|\langle x\rangle^s \varphi\|_{L^2},
\end{align*}
which is acceptable. 

For \eqref{Eest2}, we use the decay assumption on $q$ to obtain
\begin{align*}
\|[1-\chi_S(\cdot+2tv)]q(\cdot+2tv)e^{it\Delta}P_{\leq N}\chi_S\varphi\|_{L^2} & \lesssim  \langle tv\rangle^{-s}\|\varphi\|_{L^2},
\end{align*}
which is acceptable.

For \eqref{Eest3} we introduce 
\[
S'=\{|x+2tv|\leq\tfrac{1}{10}|tv|\}
\]
and observe that
\[
\text{dist}(S,S')\geq |v|t=4N|t|.
\]
Thus, Lemma~\ref{L:Enss} implies that
\begin{align*}
\|\chi_S(\cdot + 2tv)q(\cdot+2tv)e^{it\Delta}P_{\leq N}\chi_S\varphi\|_{L^2}& \lesssim \|q\|_{L^\infty} \|\chi_{S'} e^{it\Delta}P_{\leq N} \chi_S \varphi\|_{L^2} \\
& \lesssim \langle tv\rangle^{-s}\|\varphi\|_{L^2},
\end{align*}
which is acceptable.

It remains to estimate the high frequencies.  As it is straightforward to obtain the bound
\[
\|q(\cdot+2tv)e^{it\Delta}P_{>N}\varphi\|_{L^2} \lesssim \|q\|_{L^\infty}\|\varphi\|_{L^2}\lesssim\|\varphi\|_{L^2},
\]
it suffices to obtain the $|tv|^{-s}$ bound.  To this end, we use H\"older's inequality (in Lorentz spaces), the dispersive estimate \eqref{disp-lorentz}, the embedding $L^r\hookrightarrow L^{r,2}$ for $r\leq 2$, and Bernstein's inequality (recalling $|v|=4N$) to obtain
\begin{align*}
\|q(\cdot+2tv)e^{it\Delta}P_{>N}\varphi\|_{L^2} & \lesssim \|\langle x\rangle^{-s}\|_{L^{\frac{d}{s},\infty}} \|e^{it\Delta}P_{>N}\varphi\|_{L^{\frac{2d}{d-2s},2}} \\
& \lesssim |t|^{-s}\|P_{>N}\varphi\|_{L^{\frac{2d}{d+2s},2}} \\
& \lesssim |tv|^{-s} \| |\nabla|^s \varphi\|_{L^{\frac{2d}{d+2s}}},
\end{align*}
which is acceptable. \end{proof}


\section{The Direct Problem}

In this section we prove several scattering results for \eqref{nls}.  We first establish scattering for small data in Sobolev spaces.  We utilize standard contraction mapping arguments based on Strichartz estimates (see e.g. \cite{CazenaveWeissler}).  In the intercritical regime ($\tfrac{4}{d}\leq p\leq \tfrac{4}{d-2}$), the coefficient $\alpha$ and its gradient are estimated in $L^\infty$.  In the mass-subcritical regime $(p<\tfrac{4}{d})$ we impose a decay assumption on $\alpha$. 

Throughout the rest of the paper, we will regularly make use of the admissible pair
\begin{equation}\label{qr}
(q,r) = (p+2,\tfrac{2d(p+2)}{d(p+2)-4})
\end{equation}
(note that we will restrict to $p\geq 2$ in dimension $d=1$).

\begin{theorem}\label{T:intercritical} Let $d\geq 1$ and suppose $p$ satisfies
\[
\begin{cases} \tfrac{4}{d}\leq p\leq \tfrac{4}{d-2} & d\geq 3, \\ \tfrac{4}{d}\leq p<\infty & d\in\{1,2\}.\end{cases} 
\]
Let $\alpha$ be a continuous function with $\alpha,\nabla\alpha \in L^\infty$.  There exists $\eta>0$ sufficiently small that for any $u_-\in H^1$ with $\|u_-\|_{H^1}<\eta$, there exists a unique global solution $u$ to \eqref{nls} and final state $u_+\in H^1$ satisfying
\begin{equation}\label{STB1}
\| u\|_{L_t^q H_x^{1,r}(\R\times\R^d)} \lesssim \|u_-\|_{H^1} 
\qtq{and}
\lim_{t\to\pm\infty} \|u(t)-e^{it\Delta}u_{\pm}\|_{H^1} = 0,
\end{equation}
where $(q,r)$ is as in \eqref{qr}. 
\end{theorem}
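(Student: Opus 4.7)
The plan is a standard Strichartz-based contraction-mapping argument set up directly on the asymptotic state $u_-$. I would define
\[
\Phi(u)(t) := e^{it\Delta}u_- - i\int_{-\infty}^t e^{i(t-s)\Delta}[\alpha(x)|u|^p u](s)\,ds
\]
and seek $u$ as a fixed point of $\Phi$ in a ball of radius $C\eta$ in $L_t^q H_x^{1,r}(\R\times\R^d)$, with $(q,r)$ the admissible pair from \eqref{qr}. The homogeneous and inhomogeneous Strichartz estimates reduce the contraction to the nonlinear bound
\[
\|\alpha|u|^p u\|_{L_t^{q'}H_x^{1,r'}} \lesssim (\|\alpha\|_{L^\infty}+\|\nabla\alpha\|_{L^\infty})\|u\|_{L_t^q H_x^{1,r}}^{p+1}
\]
and its Lipschitz analogue for differences $u-\tilde u$.

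To prove this nonlinear estimate, I would apply the product and chain rules to $\nabla(\alpha|u|^p u)$, absorbing $\alpha$ and $\nabla\alpha$ in $L^\infty$. For the remaining $|u|^{p+1}$-type factors, H\"older in time closes cleanly because the choice $q=p+2$ gives $q'=q/(p+1)$; H\"older in space then distributes $p$ copies of $u$ into $L_x^B$ with $B=\tfrac{pd(p+2)}{4}$, leaving one factor (or the derivative factor) in $L_x^r$. The estimate is completed by the Sobolev embedding $H^{1,r}\hookrightarrow L^B$, whose validity in dimension $d\geq 3$ is equivalent to $\tfrac{4}{d}\leq p\leq \tfrac{4}{d-2}$, and which holds for all $p\geq \tfrac{4}{d}$ when $d\in\{1,2\}$. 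Checking this embedding at the two endpoints of the intercritical range---where the Sobolev exponents match on the nose---is the main technical point.

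With this nonlinear bound, $\Phi$ maps a small ball of $L_t^q H_x^{1,r}$ into itself and contracts, producing a unique fixed point which is the desired global solution; the first conclusion of \eqref{STB1} is immediate. Next, I would define
\[
u_+ := u_- - i\int_\R e^{-is\Delta}[\alpha(x)|u|^p u](s)\,ds,
\]
with convergence of the integral in $H^1$ guaranteed by the same nonlinear estimate. The convergence $\|u(t)-e^{it\Delta}u_\pm\|_{H^1}\to 0$ as $t\to\pm\infty$ then follows by applying the dual Strichartz inequality to the corresponding Duhamel tail and invoking dominated convergence, using the integrability $\alpha|u|^p u\in L_t^{q'}H_x^{1,r'}(\R\times\R^d)$ already established in the main estimate.
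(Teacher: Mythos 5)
Your outline matches the paper's proof almost verbatim: the same Duhamel map $\Phi$ anchored at $t=-\infty$, the same ball in $L_t^q H_x^{1,r}$ with $(q,r)$ from \eqref{qr}, the same H\"older/Sobolev bookkeeping (your exponent $B=\tfrac{dp(p+2)}{4}$ is the paper's $r_c$, reached via $\|u\|_{L^{r_c}}\lesssim \||\nabla|^{s_c}u\|_{L^r}$ with $s_c=\tfrac{d}{2}-\tfrac{2}{p}\in[0,1]$ precisely on the intercritical range), and essentially the same tail estimate for the existence of $u_\pm$.

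The one place your plan is underspecified, and would fail as literally stated, is the contraction step. You propose to contract in the full $L_t^qH_x^{1,r}$ metric via a ``Lipschitz analogue'' of the nonlinear estimate for differences. At the level of derivatives this requires bounding $\nabla(|u|^pu)-\nabla(|v|^pv)$ by quantities involving $\|u-v\|_{H_x^{1,r}}$, which produces a factor of the form $(|u|^{p-1}+|v|^{p-1})|u-v|(|\nabla u|+|\nabla v|)$; since the theorem permits $p=\tfrac4d<1$ when $d\geq5$, the map $z\mapsto|z|^pz$ is not $C^{1,1}$ there and no such Lipschitz bound is available. The paper sidesteps this in the standard way: the ball is taken in $L_t^qH_x^{1,r}$, but the contraction metric is only $d(u,v)=\|u-v\|_{L_t^qL_x^r}$ (no derivatives), for which the difference estimate needs nothing beyond $\bigl||u|^pu-|v|^pv\bigr|\lesssim(|u|^p+|v|^p)|u-v|$, valid for all $p>0$; one then checks that the ball is complete for this weaker metric. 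For $d\leq 4$ the intercritical range forces $p\geq1$ and your version goes through unchanged, but since the statement covers all $d\geq1$ you should adopt the weaker metric (or otherwise handle $p<1$).
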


\begin{proof} Let $u_-\in H^1$.  We will prove that if $\|u_-\|_{H^1}$ is sufficiently small, the map
\begin{equation}\label{Phi}
u\mapsto \Phi(u) = e^{it\Delta}u_- -i\int_{-\infty}^t e^{i(t-s)\Delta}\alpha|u(s)|^p u(s)\,ds
\end{equation}
is a contraction on a suitable metric space. To this end, we define
\[
X=\{u:\R\times\R^d\to\C\ |\ \|u\|_{L_t^q H_x^{1,r}(\R\times\R^d)} \leq 2C\|u_-\|_{H^1}\},
\]
which we equip with the metric
\[
d(u,v) = \|u-v\|_{L_t^q L_x^r(\R\times\R^d)}. 
\]
The constant $C$ encodes implicit constants appearing in estimates such as Strichartz and Sobolev embedding.  Throughout the proof, all space-time norms will be taken over $\R\times\R^d$ unless indicated otherwise.

We define $r_c = \tfrac{dp(p+2)}{4}$ and observe that by Sobolev embedding
\[
\|u\|_{L_x^{r_c}} \lesssim \| |\nabla|^{s_c} u\|_{L_x^r} \lesssim \| u\|_{H_x^{1,r}},\qtq{where} s_c=\tfrac{d}{2}-\tfrac{2}{p}\in[0,1].
\]

Now let $u\in X$.  By Strichartz, H\"older, the chain and product rules, we have
\begin{align*}
\|\Phi(u)\|_{L_t^q H_x^{1,r}} & \lesssim \|u_-\|_{H^1} + \|[\alpha |u|^p u]\|_{L_t^{q'} H_x^{1,r'}} \\
& \lesssim \|u_-\|_{H^1} + (\|\alpha\|_{L^\infty}+\|\nabla\alpha\|_{L^\infty}) \|u\|_{L_t^q L_x^{r_c}}^p \| u\|_{L_t^q H_x^{1,r}} \\
& \lesssim \|u_-\|_{H^1} + \| u\|_{L_t^q H_x^{1,r}}^{p+1}  \lesssim \|u_-\|_{H^1} + \|u_-\|_{H^1}^{p+1}.
\end{align*}
It follows that for $\|u_-\|_{H^1}$ sufficiently small, $\Phi:X\to X$. 

Given $u,v\in X$, we similarly estimate
\begin{align*}
\|u-v\|_{L_t^q L_x^r} & \lesssim \|\alpha[|u|^p u-|v|^p v]\|_{L_t^{q'}L_x^{r'}} \\
& \lesssim \|\alpha\|_{L^\infty}\{\|u\|_{L_t^q L_x^{r_c}}^p + \|v\|_{L_t^q L_x^{r_c}}^p\}\|u-v\|_{L_t^q L_x^r} \\
& \lesssim \|u_-\|_{H^1}^p \|u-v\|_{L_t^q L_x^r},
\end{align*}
which shows that $\Phi$ is a contraction provided $\|u_-\|_{H^1}$ is sufficiently small. 

We conclude that $\Phi$ has a unique fixed point $u\in X$, yielding the desired solution to \eqref{nls}.  The convergence $e^{-it\Delta}u(t)\to u_-$ in $H^1$ as $t\to-\infty$ follows by construction and the estimates above.  To prove the existence of a scattering state as $t\to\infty$, we estimate as above to obtain
\begin{align*}
\|e^{-it\Delta}u(t)-e^{-is\Delta}u(s)\|_{H^1} & \lesssim (\|\alpha\|_{L^\infty}+\|\nabla\alpha\|_{L^\infty}) \| u\|_{L_t^q H_x^{1,r}((s,t)\times\R^d)}^{p+1}\\
&\to 0 \qtq{as}s,t\to\infty. 
\end{align*}
Thus $\{e^{-it\Delta}u(t)\}$ is Cauchy and so converges to a unique $u_+\in H^1$ as $t\to\infty$. 
\end{proof}

We next consider the mass-subcritical regime. Assuming that $\alpha$ belongs to a suitable Lebesgue space, we can first establish scattering for small $L^2$ data. In fact, this result (as well as Theorem~\ref{T:subcritical2} below) allows for $p$ to go below the usual long-range exponent $p=\tfrac{2}{d}$.

\begin{theorem}\label{T:subcritical} Let $d\geq 1$ and suppose $p$ satisfies
\[
\begin{cases}
0<p\leq \tfrac{4}{d} & d\geq 2, \\
2 \leq p \leq4 & d=1.
\end{cases}
\]
Let $\alpha$ be a continuous function with $\alpha \in L^\infty\cap L^{\frac{2d}{4-dp}}$.  There exists $\eta>0$ sufficiently small that for any $u_-\in L^2$ with $\|u_-\|_{L^2}<\eta$, there exists a unique global solution $u$ to \eqref{nls} and final state $u_+\in L^2$ satisfying
\begin{equation}\label{STB2}
\|u\|_{L_t^q L_x^r(\R\times\R^d)}\lesssim \|u_-\|_{L^2} \qtq{and}
\lim_{t\to\pm\infty}\|u(t)-e^{it\Delta}u_\pm\|_{L^2} = 0,
\end{equation}
where $(q,r)$ is as in \eqref{qr}. 
\end{theorem}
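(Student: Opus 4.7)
The plan is to follow essentially the same contraction-mapping template used for Theorem~\ref{T:intercritical}, working now at the level of $L^2$ instead of $H^1$ and replacing the use of $\|\alpha\|_{L^\infty}$ (and $\|\nabla\alpha\|_{L^\infty}$) with the decay hypothesis $\alpha\in L^{\frac{2d}{4-dp}}$. The admissible pair $(q,r)$ from \eqref{qr} is tailored so that it can play the role of both Strichartz pair and nonlinear estimation pair simultaneously; the restriction $p\geq 2$ when $d=1$ enters through admissibility, since $r=\tfrac{2(p+2)}{p-2}$ is positive and at least $2$ precisely for $p\in[2,4]$.

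First, I would set up the Duhamel map
\[
\Phi(u)=e^{it\Delta}u_- - i\int_{-\infty}^t e^{i(t-s)\Delta}\alpha|u(s)|^p u(s)\,ds
\]
on the complete metric space
\[
X=\{u:\R\times\R^d\to\C\ |\ \|u\|_{L_t^q L_x^r(\R\times\R^d)}\leq 2C\|u_-\|_{L^2}\},\qquad d(u,v)=\|u-v\|_{L_t^q L_x^r},
\]
with $C$ absorbing the Strichartz constant. Using Strichartz in the pair $(q,r)$ and its dual, the task reduces to estimating $\|\alpha|u|^p u\|_{L_t^{q'}L_x^{r'}}$.

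Second, I would check that the exponents balance. In space, H\"older with $\tfrac{1}{r'}=\tfrac{p+1}{r}+\tfrac{1}{a}$ forces $\tfrac{1}{a}=1-\tfrac{p+2}{r}=\tfrac{4-dp}{2d}$, so that
\[
\|\alpha|u|^p u\|_{L_x^{r'}}\leq \|\alpha\|_{L^{\frac{2d}{4-dp}}}\,\|u\|_{L_x^r}^{p+1}.
\]
In time, since $q=p+2$ we have $q'(p+1)=p+2=q$, so integrating gives
\[
\|\alpha|u|^p u\|_{L_t^{q'}L_x^{r'}}\lesssim \|\alpha\|_{L^{\frac{2d}{4-dp}}}\|u\|_{L_t^q L_x^r}^{p+1}.
\]
This shows $\Phi:X\to X$ for $\|u_-\|_{L^2}$ sufficiently small.

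Third, the contraction estimate uses the pointwise bound $||u|^p u-|v|^p v|\lesssim (|u|^p+|v|^p)|u-v|$ together with the same H\"older inputs, yielding
\[
d(\Phi(u),\Phi(v))\lesssim \|\alpha\|_{L^{\frac{2d}{4-dp}}}\bigl(\|u\|_{L_t^q L_x^r}^p+\|v\|_{L_t^q L_x^r}^p\bigr)d(u,v),
\]
which is a contraction once $\|u_-\|_{L^2}$ is small. Finally, the existence of the scattering states $u_\pm\in L^2$ follows by the same nonlinear estimate applied on intervals $(s,t)$, showing that $\{e^{-it\Delta}u(t)\}$ is Cauchy in $L^2$ as $t\to\pm\infty$; convergence to $u_-$ as $t\to-\infty$ is built into the fixed-point equation.

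I do not anticipate a serious obstacle: all the work is in the bookkeeping of the exponents, and the hypothesis $\alpha\in L^{\frac{2d}{4-dp}}$ is exactly what is needed so that the single admissible pair $(q,r)$ closes both the a priori bound and the contraction estimate. The one subtlety worth flagging is the dimensional restriction in $d=1$, which is forced by the requirement $r\geq 2$ (equivalently $q>4$) in the admissibility condition $\tfrac{2}{q}+\tfrac{1}{r}=\tfrac{1}{2}$.
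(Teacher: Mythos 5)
Your proposal is correct and matches the paper's proof: the same contraction in $L_t^qL_x^r$ with the same Hölder split $\tfrac{1}{r'}=\tfrac{p+1}{r}+\tfrac{4-dp}{2d}$ placing $\alpha$ in $L^{\frac{2d}{4-dp}}$, and the same Cauchy-sequence argument for the scattering states. One cosmetic point: in $d=1$ the admissibility constraint is $q\geq 4$ (not $q>4$); the endpoint $p=2$ gives $(q,r)=(4,\infty)$, which is admissible since only $(2,\infty)$ in $d=2$ is excluded.
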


\begin{proof} We show that $\Phi$ defined in \eqref{Phi} is a contraction on the complete metric space
\[
X=\{u:\R\times\R^d\to\C\ |\ \|u\|_{L_t^q L_x^r(\R\times\R^d)}\leq 2C\|u_-\|_{L^2}\},
\]
with metric given by
\[
d(u,v) = \|u-v\|_{L_t^q L_x^r(\R\times\R^d)}.
\]
Once again $C$ encodes implicit constants appearing in the estimates below. 

The essential step is the following nonlinear estimate: by Strichartz and H\"older's inequality, we have 
\begin{align*}
\biggl\|\int_{-\infty}^t e^{i(t-s)\Delta}\alpha(x)|u(s)|^p u(s)\,ds\, \biggr\|_{L_t^q L_x^r}  &\lesssim \|\alpha |u|^p u\|_{L_t^{q'}L_x^{r'}} \\
& \lesssim \|\alpha\|_{L^{\frac{2d}{4-dp}}} \|u\|_{L_t^q L_x^r}^{p+1}.
\end{align*}
With this estimate in hand, the proof exactly parallels that of Theorem~\ref{T:intercritical}.  The constraint $p\geq 2$ in in $d=1$ is necessary to use the space $L_t^q L_x^r$ for $u$ (see \eqref{qr}) as well as the space $L^{\frac{2d}{4-dp}}$ for $\alpha$ 
\end{proof}

We next establish a mass-critical and mass-subcritical scattering theory for a class of data adapted to the setting of \cite{Watanabe}, namely, data of the form $u_-=e^{iv\cdot x}\varphi$ with $|v|\gg 1$. By working with a suitably weighted space (and imposing further decay assumptions on $\alpha$), we can recast the scattering problem for such data as a small-data problem.

Given $a\geq 0$ and $s\in[0,\tfrac{d}{2})$ we introduce the space $X^{a,s}$ via the norm 
\begin{equation}\label{Xas}
\|\varphi\|_{X^{a,s}} =\||\nabla|^a\varphi\|_{L^2} + \|\varphi\|_{X^s},
\end{equation}
where $X^{s}$ is as in \eqref{Xs}. 
To simplify the formulas below, we also introduce the parameter
\begin{equation}\label{cpd}
c=c(p,d)= \tfrac{4-p(d-2)}{2(p+2)}.
\end{equation}

\begin{theorem}\label{T:subcritical2} Let $d\geq 3$ and $0<p\leq \tfrac{4}{d}$. Let $\sigma$ satisfy
\begin{equation}\label{sigma}
 \max\{\tfrac{2}{4-p(d-2)}, \tfrac{4-dp}{4-p(d-2)}\} <\sigma<\tfrac{d}{2}
\end{equation}
and suppose that $\alpha$ is a continuous function satisfying
\begin{equation}\label{alphaW}
\langle x\rangle^{(p+1)c\sigma}\alpha \in L^{\frac{2d(p+2)}{4-dp}}. 
\end{equation}
Let $\varphi\in X^{1,\sigma}$. For $|v|$ sufficiently large, there exists a unique global solution to \eqref{nls} and final state $u_+\in L^2$ satisfying
\begin{equation}\label{WSTB}
\|\langle x\rangle^{-c\sigma}u\|_{L_{t,x}^{p+2}(\R\times\R^d)} \lesssim |v|^{-\frac{1}{p+2}}\|\varphi\|_{X^{1,\sigma}}
\end{equation}
and
\[
\lim_{t\to\pm\infty}\|u(t)-e^{it\Delta}u_\pm\|_{L^2} = 0, \qtq{where}u_- = e^{iv\cdot x}\varphi.
\]
\end{theorem}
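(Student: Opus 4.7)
My plan is to set up the problem as a small-data fixed point of the Duhamel map
\[
\Phi(u) = e^{it\Delta}e^{iv\cdot x}\varphi - i\int_{-\infty}^t e^{i(t-s)\Delta}[\alpha|u|^p u](s)\,ds
\]
in a closed ball of the space $Y$ equipped with norm $\|u\|_Y = \|\langle x\rangle^{-c\sigma}u\|_{L_{t,x}^{p+2}(\R\times\R^d)}$. The smallness of the ball will be driven by the $|v|^{-1/(p+2)}$ gain for the linear flow furnished by Proposition~\ref{P:Enss}; note that, because $c\leq 1$ and $\sigma<d/2$, we have $c\sigma\in(0,d/2)$, which keeps us in the admissible range of Proposition~\ref{P:Enss}.

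The heart of the argument is the linear bound for $e^{it\Delta}e^{iv\cdot x}\varphi$ in $\|\cdot\|_Y$. I would apply Proposition~\ref{P:Enss} with weight $q(x)=\langle x\rangle^{-c\sigma}$ and $s=c\sigma$ to obtain
\[
\|\langle x\rangle^{-c\sigma}e^{it\Delta}e^{iv\cdot x}\varphi\|_{L_x^2} \lesssim \langle tv\rangle^{-c\sigma}\|\varphi\|_{X^{c\sigma}}.
\]
The first lower bound $\sigma>2/(4-p(d-2))$ in \eqref{sigma} is precisely the condition $c\sigma(p+2)>1$, which makes $\|\langle tv\rangle^{-c\sigma}\|_{L_t^{p+2}(\R)}\lesssim |v|^{-1/(p+2)}$ and so yields the weighted $L_t^{p+2}L_x^2$ bound with the target $v$-gain. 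I would then pass from $L_x^2$ to $L_x^{p+2}$ by combining with the standard Strichartz estimate $\|e^{it\Delta}e^{iv\cdot x}\varphi\|_{L_t^{p+2}L_x^r}\lesssim \|\varphi\|_{L^2}$ for the admissible pair \eqref{qr}, interpolating pointwise in time between $L_x^2$ and $L_x^r$ (the spatial weight being placed on the $L_x^2$ side and trivially bounded on the $L_x^r$ side) to obtain \eqref{WSTB}. The regularity piece $\|\nabla\varphi\|_{L^2}$ in $\|\varphi\|_{X^{1,\sigma}}$ enters through the Strichartz/Sobolev factor needed to accommodate the exponent $r$.

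For the nonlinear term, I would factor the nonlinearity as $\alpha|u|^p u = [\langle x\rangle^{(p+1)c\sigma}\alpha]\cdot[\langle x\rangle^{-(p+1)c\sigma}|u|^p u]$ and apply inhomogeneous Strichartz for the admissible pair $(p+2,r)$ from \eqref{qr}. H\"older in $x$ separates the weighted coefficient $\langle x\rangle^{(p+1)c\sigma}\alpha$ (controlled in $L^{2d(p+2)/(4-dp)}$ by \eqref{alphaW}) from the weighted nonlinearity $(\langle x\rangle^{-c\sigma}|u|)^{p+1}$, which in turn factors as $\|u\|_Y^{p+1}$ in the dual Strichartz $L^{(p+2)/(p+1)}_{t,x}$-type norm (an explicit computation shows that the spatial exponents match $r'$, so this is indeed the dual of the admissible pair $(p+2,r)$). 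This yields a Strichartz bound for the Duhamel term in $L_t^{p+2}L_x^r$, which I convert to $\|\cdot\|_Y$ via the spatial H\"older inequality
\[
\|\langle x\rangle^{-c\sigma}g\|_{L_x^{p+2}}\leq \|\langle x\rangle^{-c\sigma}\|_{L_x^K}\|g\|_{L_x^r}, \qquad K = \tfrac{2d(p+2)}{4-dp}.
\]
The second lower bound $\sigma>(4-dp)/(4-p(d-2))$ in \eqref{sigma} is precisely the condition $c\sigma K>d$ that makes $\langle x\rangle^{-c\sigma}\in L^K(\R^d)$.

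The contraction then closes for $|v|$ sufficiently large, since the linear datum lies in a ball of radius $\sim |v|^{-1/(p+2)}\|\varphi\|_{X^{1,\sigma}}$ while the Duhamel term is controlled by a strictly higher power $\|u\|_Y^{p+1}$; an analogous bound for $\Phi(u)-\Phi(w)$ closes the Banach fixed-point argument. The $L^2$ scattering as $t\to\pm\infty$ follows from a Cauchy-sequence argument based on the Strichartz bound on the Duhamel tail over $(-\infty,s)$ and $(t,\infty)$. I expect the principal obstacle to be Step~2: the target norm $L_{t,x}^{p+2}$ is not itself an admissible Strichartz norm in the mass-subcritical regime, so extracting the full $|v|^{-1/(p+2)}$ decay from Proposition~\ref{P:Enss} through the spatial interpolation with the Strichartz $L_x^r$ control must be done carefully so that the $v$-gain survives rather than being diluted; the double role of the two lower bounds on $\sigma$ in \eqref{sigma}—one securing temporal integrability, the other spatial integrability of the weight—reflects precisely this tension.
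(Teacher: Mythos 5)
Your overall architecture (small-data fixed point in $Y$ with norm $\|\langle x\rangle^{-c\sigma}u\|_{L_{t,x}^{p+2}}$, smallness supplied by Proposition~\ref{P:Enss}, nonlinear term handled by pulling $\langle x\rangle^{-c\sigma}\in L^{2d(p+2)/(4-dp)}$ out by H\"older and then running Strichartz for the pair \eqref{qr}) matches the paper, and your exponent bookkeeping for the nonlinear estimate and the roles of the two lower bounds in \eqref{sigma} are correct. The gap is exactly where you predicted it: the linear estimate. Interpolating pointwise in time between $L_x^2$ and the Strichartz space $L_x^r$ does not work. The interpolation parameter for $\tfrac{1}{p+2}=\tfrac{\theta}{2}+\tfrac{1-\theta}{r}$ is $\theta=\tfrac{4-dp}{4}$, which vanishes at the mass-critical endpoint $p=\tfrac4d$ (there $r=p+2$ and the $L^2$ factor disappears entirely, so all $|v|$-decay is lost). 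Away from the endpoint you face a dichotomy: if you keep the weight $\langle x\rangle^{-c\sigma}$ on the whole function and bound the $L^r$ factor by dropping the weight, the surviving decay is only $\langle tv\rangle^{-c\sigma\theta}$, and $c\sigma\theta(p+2)>1$ is strictly stronger than \eqref{sigma}; if instead you load the full weight onto the $L^2$ factor as $\langle x\rangle^{-c\sigma/\theta}$ to recover $\langle tv\rangle^{-c\sigma}$, Proposition~\ref{P:Enss} requires $c\sigma/\theta<\tfrac{d}{2}$, which fails for $p$ near (and at) $\tfrac4d$. Either way the argument does not close under the stated hypotheses.

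The paper's resolution is to interpolate against the Sobolev-critical exponent rather than the Strichartz exponent: pointwise in $t$,
\[
\|\langle x\rangle^{-c\sigma}F\|_{L_x^{p+2}}\lesssim \|\langle x\rangle^{-\sigma}F\|_{L_x^2}^{c}\,\|F\|_{L_x^{\frac{2d}{d-2}}}^{1-c},
\]
which is legitimate because $\tfrac{1}{p+2}=\tfrac{c}{2}+\tfrac{(1-c)(d-2)}{2d}$ is an identity for $c$ as in \eqref{cpd} (this is why $c$ is defined the way it is, and it is nondegenerate at $p=\tfrac4d$, where $c=\tfrac{2}{d+2}$). One then applies Proposition~\ref{P:Enss} with the full weight $s=\sigma\in(0,\tfrac d2)$ to the first factor, getting $\langle tv\rangle^{-\sigma}$ raised to the power $c$, i.e.\ exactly $\langle tv\rangle^{-c\sigma}$, and Sobolev embedding $\dot H^1\hookrightarrow L^{2d/(d-2)}$ (combined with \eqref{boost}) to the second. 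This also corrects your attribution of where the derivative in $X^{1,\sigma}$ is used: the Strichartz bound $\|e^{it\Delta}e^{iv\cdot x}\varphi\|_{L_t^{p+2}L_x^r}\lesssim\|\varphi\|_{L^2}$ costs no regularity; the $\|\nabla\varphi\|_{L^2}$ piece is needed precisely for the $L^{2d/(d-2)}$ factor (and this is also where $d\geq3$ enters). The rest of your proof (nonlinear estimate, contraction, Cauchy argument for $u_+$) agrees with the paper.
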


%

\begin{proof} We wish to close a contraction mapping argument for the map $\Phi$ in \eqref{Phi} in the space
\[
Y=\{u:\R\times\R^d\to\C \ | \ \|\langle x\rangle^{-c\sigma}u\|_{L_{t,x}^{p+2}}\leq 2C|v|^{-\frac{1}{p+2}}\|\varphi\|_{X^{1,\sigma}}\}
\]
with metric
\[
d(u,v)=\|\langle x\rangle^{-c\sigma}[u-v]\|_{L_{t,x}^{p+2}}.
\]
The constant $C$ encodes implicit constants appearing in several inequalities, including the Strichartz estimates and the inequality in Proposition~\ref{P:Enss}.

We begin with the linear term in the definition of $\Phi$ (see \eqref{Phi}).  By H\"older's inequality, we have
\begin{align*}
\| \langle x\rangle^{-c\sigma}e^{it\Delta}e^{iv\cdot x}\varphi\|_{L_x^{p+2}} \lesssim \|\langle x\rangle^{-\sigma}e^{it\Delta}e^{iv\cdot x}\varphi\|_{L_x^2}^{c}\|e^{it\Delta}e^{iv\cdot x}\varphi\|_{L_x^{\frac{2d}{d-2}}}^{1-c}.
\end{align*}
Using \eqref{boost} and Sobolev embedding, we obtain
\[
\|e^{it\Delta}e^{iv\cdot x}\varphi\|_{L^{\frac{2d}{d-2}}}\lesssim \|e^{it\Delta}\varphi\|_{L^{\frac{2d}{d-2}}} \lesssim \|\varphi\|_{\dot H^1},
\]
while Proposition~\ref{P:Enss} implies
\[
\|\langle x\rangle^{-\sigma}e^{it\Delta}e^{iv\cdot x}\varphi\|_{L^2}  \lesssim \langle vt\rangle^{-\sigma}\|\varphi\|_{X^{\sigma}}.
\]
Thus, by \eqref{sigma} and a change of variables, we obtain 
\begin{align*}
\| \langle x\rangle^{-c\sigma}e^{it\Delta}e^{iv\cdot x}\varphi\|_{L_{t,x}^{p+2}} & \lesssim \| \langle vt\rangle^{-c\sigma}\|_{L_t^{p+2}} \|\varphi\|_{X^{1,\sigma}} \lesssim |v|^{-\frac{1}{p+2}} \|\varphi\|_{X^{1,\sigma}}.
\end{align*}

We turn to the nonlinear estimate.  We will use the same Strichartz pair $(q,r)$ as in the proofs of Theorem~\ref{T:intercritical} and Theorem~\ref{T:subcritical}; see \eqref{qr}.  We also observe that \eqref{sigma} guarantees
\[
\langle x\rangle^{-c\sigma}\in L^{\frac{2d(p+2)}{4-dp}}. 
\]
Thus, given $u\in Y$, we use H\"older's inequality, Strichartz, and \eqref{alphaW} to obtain
\begin{align*}
\biggl\|& \langle x\rangle^{-c\sigma} \int_{-\infty}^t e^{i(t-s)\Delta}\alpha|u|^p u(s)\,ds\biggr\|_{L_{t,x}^{p+2}} \\
& \lesssim \|\langle x\rangle^{-c\sigma}\|_{L^{\frac{2d(p+2)}{4-dp}}}\biggl\|\int_{-\infty}^t e^{i(t-s)\Delta}\alpha(x)|u|^p u(s)\,ds\biggr\|_{L_t^q L_x^r} \\
& \lesssim \| \alpha |u|^p u\|_{L_t^{q'}L_x^{r'}} \\
& \lesssim \| \langle x\rangle^{(p+1)c\sigma}\alpha\|_{L^{\frac{2d(p+2)}{4-dp}}} \|\langle x\rangle^{-c\sigma}u\|_{L_{t,x}^{p+2}}^{p+1}  \lesssim |v|^{-\frac{p+1}{p+2}}\|\varphi\|_{X^{1,\sigma}}^{p+1}. 
\end{align*}
Choosing $|v|$ sufficiently large we obtain $\Phi:Y\to Y$.

Using similar estimates, we find that for $u,v\in Y$,
\begin{align*}
\|\langle x&\rangle^{-c\sigma}[\Phi(u)-\Phi(v)]\|_{L_{t,x}^{p+2}} \\
& \lesssim \|\langle x\rangle^{(p+1)c\sigma}\alpha\|_{L^{\frac{2d(p+2)}{4-dp}}}\{\|\langle x\rangle^{-c\sigma}u\|_{L_{t,x}^{p+2}}^p + \|\langle x\rangle^{-c\sigma}v\|_{L_{t,x}^{p+2}}^p\}\\
& \quad\quad \times \|\langle x\rangle^{-c\sigma}[u-v]\|_{L_{t,x}^{p+2}} \\
& \lesssim |v|^{-\frac{p}{p+1}}\|\langle x\rangle^{-c\sigma}[u-v]\|_{L_{t,x}^{p+2}},
\end{align*}
so that $\Phi$ is a contraction provided $|v|$ is sufficiently large.  We therefore obtain the global solution $u$ to \eqref{nls} satisfying \eqref{WSTB}.

The $L^2$ convergence $e^{-it\Delta} u(t)\to u_-$ as $t\to-\infty$ follows from the estimates above.  It remains to prove the existence of the $L^2$ scattering state $u_+$.  In fact, by the estimates above, we have
\begin{align*}
\|e^{-it\Delta}u(t)-e^{-is\Delta}u(s)\|_{L^2} & \lesssim \|\alpha |u|^p u\|_{L_t^{q'}L_x^{r'}((s,t)\times\R^d)} \\
& \lesssim \|\langle x\rangle^{-c\sigma}u\|_{L_{t,x}^{p+2}((s,t)\times\R^d)}^{p+1} \to 0
\end{align*}
as $s,t\to\infty$, which yields the result. \end{proof}

\section{The Inverse Problem}

Theorems~\ref{T:intercritical}, \ref{T:subcritical}, and \ref{T:subcritical2} show that under suitable assumptions on $(p,\alpha)$ we can define final states $u_+$ corresponding to data $u_-$ via the solution to \eqref{nls}. We denote the scattering map sending $u_-$ to $u_+$ by
\[
S=S_{p,\alpha}:A\to \begin{cases} H^1 & \text{in Theorem~\ref{T:intercritical}}, \\ L^2 & \text{in Theorems~\ref{T:subcritical}~and~\ref{T:subcritical2}.}\end{cases}
\]
The proof of Theorem~\ref{T:intercritical} shows that we may take
\[
A=\{\varphi\in H^1:\|\varphi\|_{H^1}<\eta\}\qtq{in Theorem~\ref{T:intercritical},}
\]
with $\eta=\eta(\alpha)$ sufficiently small. Similarly, we may take
\[
A=\{\varphi\in L^2:\|\varphi\|_{L^2}<\eta\}\qtq{in Theorem~\ref{T:subcritical},}
\]
with $\eta=\eta(\alpha)$ sufficiently small. Finally, choosing $\sigma>0$ satisfying \eqref{sigma}, the proof of Theorem~\ref{T:subcritical2} shows that we may take
\[
A=\bigcup_{M>0} \{e^{iv\cdot x}\varphi:\|\varphi\|_{X^{1,\sigma}}\leq M,\quad |v|>CM^{p+2}\} \qtq{in Theorem~\ref{T:subcritical2},}
\]
where $C=C(\alpha)$ is sufficiently large.

In all cases, we have the following implicit formula for $S$:
\[
Su_- = u_- - i\int_\R e^{-it\Delta} \alpha|u|^p u(t)\,dt,
\]
where $u$ is the solution to \eqref{nls} that scatters backward in time to $u_-$.  We wish to show that knowledge of $S$ is sufficient to determine the nonlinearity in \eqref{nls}.

We first consider the case of Theorem~\ref{T:intercritical} and Theorem~\ref{T:subcritical} and prove a result similar to the one appearing in \cite{Strauss}. 

\begin{theorem}\label{T:recovery1} Let $(d,p,\alpha)$ satisfy the assumptions of Theorem~\ref{T:intercritical} or Theorem~\ref{T:subcritical} Let $S$ denote the corresponding scattering map.  Let
\[
\varphi\in\begin{cases} H^1 & \text{in the case of Theorem~\ref{T:intercritical}}, \\ L^2 & \text{in the case of Theorem~\ref{T:subcritical},}\end{cases}
\]
and let $\psi\in L^2$. Then 
\[
\lim_{\eps\to 0}i\eps^{-(p+1)}\langle (S-I)(\eps\varphi),\psi\rangle = \int_\R \langle\alpha |e^{it\Delta}\varphi|^{p}e^{it\Delta}\varphi,e^{it\Delta}\psi\rangle\,dt. 
\]
\end{theorem}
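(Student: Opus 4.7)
My plan is to start from the implicit formula for $S-I$ noted at the beginning of Section~4, namely
\[
\langle (S-I)(\eps\varphi), \psi\rangle = -i\int_\R \langle \alpha |u_\eps|^p u_\eps(t), e^{it\Delta}\psi\rangle\,dt,
\]
where $u_\eps$ denotes the unique NLS solution produced by Theorem~\ref{T:intercritical} or Theorem~\ref{T:subcritical} with scattering data $\eps\varphi$. Multiplying by $i\eps^{-(p+1)}$ reduces the claim to
\[
\eps^{-(p+1)}\int_\R \langle \alpha |u_\eps|^p u_\eps, e^{it\Delta}\psi\rangle\,dt \longrightarrow \int_\R \langle \alpha |e^{it\Delta}\varphi|^p e^{it\Delta}\varphi, e^{it\Delta}\psi\rangle\,dt,
\]
so it suffices to linearize $u_\eps$ about the free evolution $\eps e^{it\Delta}\varphi$ and show that the remainder contributes only at higher order.

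Accordingly I would set $w_\eps := u_\eps - \eps e^{it\Delta}\varphi$, which by definition of $\Phi$ in \eqref{Phi} satisfies the Duhamel identity
\[
w_\eps(t) = -i\int_{-\infty}^t e^{i(t-s)\Delta} \alpha |u_\eps|^p u_\eps(s)\,ds.
\]
Let $Z$ denote the Strichartz space from Section~3, i.e.\ $L_t^q H_x^{1,r}$ in the intercritical case and $L_t^q L_x^r$ in the mass-subcritical case. The contraction estimates proved for Theorem~\ref{T:intercritical} and Theorem~\ref{T:subcritical} already give $\|u_\eps\|_Z \lesssim \eps\|\varphi\|$; applying the same nonlinear Strichartz estimate once more to the Duhamel formula for $w_\eps$ yields the improved bound $\|w_\eps\|_Z \lesssim \eps^{p+1}$.

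For the final step I would invoke the standard pointwise inequality $||z|^p z - |w|^p w| \lesssim (|z|^p + |w|^p)|z-w|$ (valid for $p \geq 0$), applied with $z = u_\eps$ and $w = \eps e^{it\Delta}\varphi$, to obtain
\[
\bigl| |u_\eps|^p u_\eps - \eps^{p+1}|e^{it\Delta}\varphi|^p e^{it\Delta}\varphi\bigr| \lesssim \bigl(|u_\eps|^p + \eps^p|e^{it\Delta}\varphi|^p\bigr)|w_\eps|.
\]
Plugging this into the H\"older + Strichartz bookkeeping already used in Section~3 (with $\|\alpha\|_{L^\infty}$ or $\|\alpha\|_{L^{\frac{2d}{4-dp}}}$ absorbing the coefficient), and pairing against $e^{it\Delta}\psi$ controlled in $L_t^q L_x^r$ by $\|\psi\|_{L^2}$ via Strichartz, produces
\[
\biggl|\int_\R \langle \alpha \bigl(|u_\eps|^p u_\eps - \eps^{p+1}|e^{it\Delta}\varphi|^p e^{it\Delta}\varphi\bigr), e^{it\Delta}\psi\rangle\,dt\biggr| \lesssim \eps^{2p+1}.
\]
Dividing by $\eps^{p+1}$ gives an $O(\eps^p)$ error and proves the limit. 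I expect no substantive obstacle here: the only real content is the improved bound $\|w_\eps\|_Z \lesssim \eps^{p+1}$ on the remainder, and absolute convergence of the target integral follows from precisely the same H\"older + Strichartz estimates applied to $e^{it\Delta}\varphi$ in place of $u_\eps/\eps$.
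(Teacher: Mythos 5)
Your proposal is correct and follows essentially the same route as the paper: both start from the implicit formula for $S-I$, split off the main term $\eps^{p+1}\int\langle\alpha|e^{it\Delta}\varphi|^pe^{it\Delta}\varphi,e^{it\Delta}\psi\rangle\,dt$, bound the difference of nonlinearities by $(|u|^p+|e^{it\Delta}\eps\varphi|^p)|u-e^{it\Delta}\eps\varphi|$, substitute the Duhamel formula for the remainder, and close with the H\"older--Strichartz bookkeeping from Theorems~\ref{T:intercritical} and \ref{T:subcritical} to get an $O(\eps^{2p+1})$ error. The only cosmetic difference is that you isolate the intermediate bound $\|w_\eps\|_Z\lesssim\eps^{p+1}$ as a separate step, whereas the paper inserts the Duhamel integral directly into the $L^1_{t,x}$ pairing and estimates it in place.
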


\begin{proof} Given $\eps>0$ sufficiently small, we set $u_-=\eps\varphi\in A$ and let $u$ be the corresponding solution to \eqref{nls} constructed in Theorem~\ref{T:intercritical} or Theorem~\ref{T:subcritical}.  We write
\begin{align}
i\langle (S-I)(\eps\varphi),\psi\rangle & = \int_\R \langle \alpha |u|^p u,e^{it\Delta}\psi\rangle\,dt \nonumber\\
& = \eps^{p+1}\int_\R \langle\alpha |e^{it\Delta}\varphi|^{p}e^{it\Delta}\varphi,e^{it\Delta}\psi\rangle\,dt \nonumber\\
& \quad + \int \langle\alpha[|u|^p u - |e^{it\Delta}\eps\varphi|^p e^{it\Delta}\eps\varphi],e^{it\Delta}\psi\rangle\,dt. \label{Err1}
\end{align}
To complete the proof, we will show that $|\eqref{Err1}|=o(\eps^{p+1})$.

We begin by using the Duhamel formula (cf. \eqref{Phi}) to obtain the estimate
\begin{align*}
|\eqref{Err1}| & \lesssim \| \alpha e^{it\Delta}\psi\bigl[|u|^p + |e^{it\Delta}\eps\varphi|^p\bigr]\,\bigl[u-e^{it\Delta}\eps\varphi\bigr]\|_{L_{t,x}^1} \\
& \lesssim \biggl\|\alpha e^{it\Delta}\eps\psi \bigl[|u|^p + |e^{it\Delta}\eps\varphi|^p\bigr]\biggl[\int_{-\infty}^t e^{i(t-s)\Delta}\alpha|u|^p u(s)\,ds\biggr]\biggr\|_{L_{t,x}^1}
\end{align*}

We first consider the setting of Theorem~\ref{T:intercritical}.  Using the estimates appearing in the proof of that theorem, we apply H\"older's inequality, Strichartz, and \eqref{STB1} to obtain
\begin{align*}
|\eqref{Err1}| & \lesssim \|\alpha\|_{L^\infty} \|e^{it\Delta}\psi[|u|^p +|e^{it\Delta}\eps\varphi|^p]\|_{L_t^{q'}L_x^{r'}}\biggl\|\int_0^t e^{i(t-s)\Delta}\alpha|u|^p u(s)\,ds\biggr\|_{L_t^q L_x^r} \\
& \lesssim \|e^{it\Delta}\psi\|_{L_t^q L_x^r}\bigl[\|u\|_{L_t^q L_x^{r_c}}^p+\|e^{it\Delta}\eps\varphi\|_{L_t^q L_x^{r_c}}^p\bigr]\|\alpha |u|^p u\|_{L_t^{q'}L_x^{r'}} \\
& \lesssim \|\psi\|_{L^2}\|\eps\varphi\|_{H^1}^{2p+1} \lesssim \eps^{2p+1},
\end{align*}
which is acceptable. 

We next consider the setting of Theorem~\ref{T:subcritical}.  Applying the estimates used to prove that theorem and \eqref{STB2}, we obtain
\begin{align*}
|\eqref{Err1}| & \lesssim \|\alpha e^{it\Delta}\psi[|u|^p + |e^{it\Delta}\eps\varphi|^p]\|_{L_t^{q'}L_x^{r'}}\biggl\| \int_0^t e^{i(t-s)\Delta}\alpha|u|^p u(s)\,ds\biggr\|_{L_t^q L_x^r} \\
& \lesssim \|\alpha\|_{L^{\frac{2d}{4-dp}}}\|e^{it\Delta}\psi\|_{L_t^q L_x^r}\bigl[\|u\|_{L_t^q L_x^r}^p +\|e^{it\Delta}\varphi\|_{L_t^q L_x^r}^p\bigr]\|\alpha |u|^p u\|_{L_t^{q'}L_x^{r'}} \\
& \lesssim \|\psi\|_{L^2}\|\eps\varphi\|_{L^2}^{2p+1}\lesssim \eps^{2p+1}, 
\end{align*}
which is acceptable. \end{proof}

\begin{corollary}\label{C1} Let $d\geq 1$ and suppose $(p,\alpha)$ and $(\tilde p,\tilde\alpha)$ satisfy the assumptions of Theorem~\ref{T:intercritical} or Theorem~\ref{T:subcritical}.  Let $S:A\to L^2$ and $\tilde S:\tilde A\to L^2$ denote the corresponding scattering maps. 

If $S(f)=\tilde S(f)$ for all $f\in A\cap \tilde A$, then $p=\tilde p$ and $\alpha=\tilde \alpha$.  
\end{corollary}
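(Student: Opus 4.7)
The plan is to reduce the corollary to the recovery integral of Theorem~\ref{T:recovery1}, use the two different asymptotic rates in $\eps$ to force $p=\tilde p$, and then recover $\alpha$ pointwise by a spatial concentration argument.  For any $\varphi$ in the common domain (of $H^1$ or $L^2$ type, as appropriate) and any $\psi\in L^2$, one has $\eps\varphi\in A\cap\tilde A$ for $\eps>0$ sufficiently small, so the hypothesis $S=\tilde S$ on $A\cap\tilde A$ gives $(S-I)(\eps\varphi)=(\tilde S-I)(\eps\varphi)$.  Applying Theorem~\ref{T:recovery1} to each of $(p,\alpha)$ and $(\tilde p,\tilde\alpha)$ then yields
\[
\lim_{\eps\to 0}i\eps^{-(p+1)}\langle(S-I)(\eps\varphi),\psi\rangle = I_\alpha(\varphi,\psi),
\]
\[
\lim_{\eps\to 0}i\eps^{-(\tilde p+1)}\langle(S-I)(\eps\varphi),\psi\rangle = I_{\tilde\alpha}(\varphi,\psi),
\]
where $I_\alpha(\varphi,\psi):=\int_\R\langle\alpha|e^{it\Delta}\varphi|^p e^{it\Delta}\varphi,e^{it\Delta}\psi\rangle\,dt$ and $I_{\tilde\alpha}$ is defined analogously with $(\tilde p,\tilde\alpha)$.

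To obtain $p=\tilde p$, I would argue by contradiction: assume $p<\tilde p$ and use the identity
\[
i\eps^{-(p+1)}\langle(S-I)(\eps\varphi),\psi\rangle = \eps^{\tilde p - p}\cdot i\eps^{-(\tilde p+1)}\langle(S-I)(\eps\varphi),\psi\rangle,
\]
whose right-hand side tends to $0$ as $\eps\to 0$.  This forces $I_\alpha(\varphi,\psi)=0$ for every admissible pair $\varphi,\psi$; the pointwise-recovery step below (applied with $\tilde\alpha=0$) then gives $\alpha\equiv 0$, whence $S=I$, and reversing the argument forces $\tilde\alpha\equiv 0$ as well.  This degenerate case, in which both nonlinearities vanish identically and $p$ is genuinely undetermined by the equation, is the standard trivial exception; outside it, $p=\tilde p$.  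Setting $\beta:=\alpha-\tilde\alpha$ then yields $I_\beta(\varphi,\psi)=0$ for all admissible $\varphi,\psi$.

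To conclude $\beta\equiv 0$, I would fix an arbitrary $x_0\in\R^d$, pick a Schwartz function $g$, and take the concentrating family $\varphi_\lambda(x)=\psi_\lambda(x)=\lambda^{d/2}g(\lambda(x-x_0))$.  Using the scaling $e^{it\Delta}\varphi_\lambda(x)=\lambda^{d/2}[e^{i\lambda^2 t\Delta}g](\lambda(x-x_0))$ and the change of variables $y=\lambda(x-x_0)$, $\tau=\lambda^2 t$, the identity $I_\beta(\varphi_\lambda,\varphi_\lambda)=0$ becomes, after dividing by the common $\lambda$-power,
\[
\int_\R\int_{\R^d}\beta(x_0+y/\lambda)\,|e^{i\tau\Delta}g(y)|^{p+2}\,dy\,d\tau = 0.
\]
Letting $\lambda\to\infty$, continuity of $\beta$ and its $L^\infty$ bound, together with integrability of $|e^{i\tau\Delta}g|^{p+2}$ over $\R\times\R^d$, allow dominated convergence; choosing $g$ so that the remaining integral is strictly positive then forces $\beta(x_0)=0$.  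Since $x_0$ is arbitrary and $\beta$ is continuous, $\alpha=\tilde\alpha$.

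The main obstacle will be verifying the integrability $|e^{i\tau\Delta}g|^{p+2}\in L^1_{\tau,y}$ needed for dominated convergence.  For $p>2/d$ this follows readily from combining the dispersive estimate with $L^2$ conservation; in the smaller-exponent range of Theorem~\ref{T:subcritical} one can arrange it by choosing $g$ with additional vanishing moments (improving the long-time dispersive decay of $e^{i\tau\Delta}g$) or by concentrating $\psi_\lambda$ at a distinct point $x_1\neq x_0$ so that the mixed integrand decays integrably in $t$.  This pointwise-recovery step is essentially the classical argument of \cite{Strauss, Weder1}.
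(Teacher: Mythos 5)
Your overall strategy coincides with the paper's: apply Theorem~\ref{T:recovery1}, exploit the $\eps^{p+1}$ rate to pin down $p$ (your rate-comparison is an equivalent variant of the paper's ratio test on $2\eps$ versus $\eps$, and both arguments share the same tacit exclusion of the trivial case $\alpha=\tilde\alpha=0$), and then recover $\alpha-\tilde\alpha$ pointwise by concentrating the data and passing to the limit by dominated convergence. Your rescaling with $\lambda\to\infty$ is the paper's rescaling with $\lambda\to 0$ after relabeling, so for the intercritical case and for $p>\tfrac{2}{d}$ your argument is complete.

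The genuine gap is the range $0<p\le\tfrac{2}{d}$, which Theorem~\ref{T:subcritical} explicitly allows, and neither of your proposed repairs works there. The obstruction is not a technicality: by the large-time asymptotic $e^{i\tau\Delta}g(y)\approx(4\pi i\tau)^{-d/2}e^{i|y|^2/4\tau}\hat g(\tfrac{y}{2\tau})$ one has $\int|e^{i\tau\Delta}g|^{p+2}\,dy\sim|\tau|^{-dp/2}\|\hat g\|_{L^{p+2}}^{p+2}$, so $\iint|e^{i\tau\Delta}g|^{p+2}\,dy\,d\tau=\infty$ for \emph{every} nonzero $g$ when $dp\le 2$. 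Vanishing moments only make $\hat g$ vanish at the origin; they do not change $\|\hat g\|_{L^{p+2}}$ and hence do not improve the $|\tau|^{-dp/2}$ rate. Concentrating $\psi_\lambda$ at $x_1\ne x_0$ is also useless: after the change of variables the rescaled $\psi$ is translated by $\lambda(x_1-x_0)\to\infty$, so the limiting identity degenerates to $0=0$. The paper's resolution is different: it never discards the coefficient, but instead runs a two-step dominated convergence on $g_\lambda(t)=\int\alpha(x_0+\lambda x)|e^{it\Delta}\varphi|^{p+2}\,dx$, using the $L^\infty$ bound on $\alpha$ for the inner ($x$-)limit and the decay hypothesis $\alpha\in L^{\frac{2d}{4-dp}}$ together with H\"older against the Strichartz norm $\|e^{it\Delta}\varphi\|_{L_x^r}^{p+2}\in L_t^1$ (with $(q,r)$ as in \eqref{qr}, $q=p+2$) to majorize in $t$. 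In other words, for small $p$ the $t$-integrability must come from the assumed spatial decay of $\alpha$, not from the free evolution alone; without incorporating that decay into the dominating function, your limit does not exist and the pointwise recovery step fails in this range.
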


\begin{proof} Fix $\varphi\in \mathcal{S}$.  The proof of Theorem~\ref{T:recovery1} shows that
\[
\frac{\langle (S-I)(2\eps\varphi),\varphi\rangle}{\langle (S-I)(\eps\varphi),\varphi\rangle} = \frac{2^{p+1}\eps^{p+1}C+\mathcal{O}(\eps^{2p+1})}{\eps^{p+1}C+\mathcal{O}(\eps^{2p+1})} \to 2^{p+1} \qtq{as}\eps\to 0,
\]
where
\[
C=C(\alpha,\varphi,p):=\iint \alpha(x)|e^{it\Delta}\varphi|^{p+2}\,dx\,dt. 
\]
Similarly, 
\[
\frac{\langle (\tilde S-I)(2\eps\varphi),\varphi\rangle}{\langle (\tilde S-I)(\eps\varphi),\varphi\rangle}\to 2^{\tilde p+1}\qtq{as}\eps\to 0.
\]
Thus if $S=\tilde S$, we first obtain $p=\tilde p$.

Applying Theorem~\ref{T:recovery1}, we further obtain
\begin{equation}\label{TEB}
\iint \alpha(x)|e^{it\Delta}\varphi|^{p+2}\,dx\,dt = \iint \tilde \alpha(x) |e^{it\Delta}\varphi|^{p+2}\,dx\,dt \qtq{for all}\varphi\in\mathcal{S}.
\end{equation}
It therefore suffices to prove that if
\begin{equation}\label{TEB2}
\iint \alpha(x)|e^{it\Delta}\varphi(x)|^{p+2}\,dx\,dt = 0\qtq{for all}\varphi\in\mathcal{S},
\end{equation}
then $\alpha\equiv 0$.  To prove this, we utilize an argument appearing in \cite{ChenMurphy} (see also \cite{KMV}).

First, given $\varphi\in\mathcal{S}$, we define
\[
K_\varphi(x) = \int_{\R} |e^{it\Delta}\varphi(x)|^{p+2}\,dt
\]
and claim that $K_\varphi\in L^2(\R^d)$.  To prove this, we use Minkowski's integral inequality, Sobolev embedding, and the dispersive estimate to obtain the following:
\begin{align*}
\|\,\|e^{it\Delta}\varphi\|_{L_t^{p+2}}^{p+2}\|_{L_x^2} & \lesssim \|e^{it\Delta}\varphi\|_{L_x^{2(p+2)}L_t^{p+2}}^{p+2} \\
& \lesssim \|e^{it\Delta}\varphi\|_{L_t^{p+2} L_x^{2(p+2)}}^{p+2} \\
& \lesssim_\varphi \|\langle t\rangle^{-[\frac{d}{2}-\frac{d}{2(p+2)}]}\|_{L_t^{p+2}}^{p+2}\lesssim_\varphi 1
\end{align*}
provided $p>\max\{\frac{2}{d}-1,0\}$. 

We now specialize to the case
\[
\varphi(x) = \exp\{-\tfrac{|x|^2}{4}\},\qtq{so that} e^{it\Delta}\varphi(x) = \bigl(1+it)^{-\frac{d}{2}}\exp\{-\tfrac{|x|^2}{4(1+it)}\}
\]
(see \cite{Visan}).  In particular, we have
\[
K_\varphi(x) = \int_\R (1+t^2)^{-\frac{d(p+2)}{4}}\exp\{-\tfrac{(p+2)|x|^2}{4(1+t^2)}\}\,dt,
\]
and so by translation invariance for the linear Schr\"odinger equation, \eqref{TEB2} implies
\[
\int \alpha(x) K_\varphi(x-x_0)\,dx = 0 \qtq{for all}x_0\in \R^d. 
\]
To see that this implies $\alpha\equiv 0$, it therefore suffices to verify that $\hat K_\varphi\neq 0$ almost everywhere.  In fact, for any $\xi\neq 0$, we can compute $\hat K_\varphi(\xi)$ as a Gaussian integral:
\begin{align*}
\hat K_\varphi(\xi) & = (2\pi)^{-\frac{d}{2}}\int_\R (1+t^2)^{-\frac{d(p+2)}{4}} \int_{\R^d} \exp\{-ix\cdot\xi-\tfrac{p+2}{4(1+t^2)}|x|^2\}\,dx\,dt \\
& = c_{d,p}\int_\R (1+t^2)^{-\frac{dp}{4}}\exp\{-\tfrac{(1+t^2)|\xi|^2}{p+2}\}\,dt. 
\end{align*}
As $\hat K_\varphi(\xi)$ is the integral of a positive function, we conclude that $\hat K_\varphi(\xi)>0$ for all $\xi\neq 0$. \end{proof}

We next consider the case of Theorem~\ref{T:subcritical2} and prove a result similar to the one appearing in \cite{Watanabe}.  We recall the spaces $X^{a,s}$ defined in \eqref{Xas}, \eqref{Xs}.

\begin{theorem}\label{T:recovery2} Let $d\geq 3$.  Suppose $(p,\alpha)$ satisfy the assumptions of Theorem~\ref{T:subcritical2} and choose $\sigma$ satisfying \eqref{sigma}.  Assume additionally that 
\begin{equation}\label{alpha-int}
|\alpha(x)|\lesssim \langle x\rangle^{-s}\qtq{for some}s\in(1,\tfrac{d}{2}).
\end{equation}
Let $S:A\to L^2$ denote the corresponding scattering map, and let $\varphi,\psi\in X^{\frac{d}{2}+,\sigma}.$  

For any $\theta\in\mathbb{S}^{d-1}$, 
\[
\lim_{\rho\to\infty} i\rho\langle(S-I)(e^{i\rho\theta\cdot x}\varphi),e^{i\rho\theta\cdot x}\psi\rangle = \int_\R \langle \alpha(\cdot + 2t\theta) |\varphi|^p \varphi, \psi\rangle\,dt. 
\]
\end{theorem}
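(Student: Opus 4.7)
The plan is to substitute the implicit formula
\[
(S-I)u_- = -i\int_\R e^{-it\Delta}\alpha|u|^p u(t)\,dt
\]
and split
\[
i\langle (S-I)(e^{i\rho\theta\cdot x}\varphi), e^{i\rho\theta\cdot x}\psi\rangle = \int_\R\langle \alpha|u_L|^p u_L, u_L^\psi\rangle\,dt + \mathrm{Err},
\]
where $u$ denotes the solution from Theorem~\ref{T:subcritical2}, $u_L:=e^{it\Delta}(e^{i\rho\theta\cdot x}\varphi)$, $u_L^\psi:=e^{it\Delta}(e^{i\rho\theta\cdot x}\psi)$, and $\mathrm{Err}$ collects the contribution of $|u|^p u - |u_L|^p u_L$. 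After multiplying by $\rho$, the principal term should produce the claimed X-ray integral, while $\mathrm{Err}$ should be $o(\rho^{-1})$.

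For the principal term, I would apply the boost identity \eqref{boost} to both $u_L$ and $u_L^\psi$; the phases $e^{\pm i\rho^2 t}$ cancel in $|u_L|^p u_L\,\overline{u_L^\psi}$. Changing variables $y=x-2t\rho\theta$ and then rescaling $\tau=\rho t$ converts the principal term into
\[
\frac{1}{\rho}\int_\R\!\!\int_{\R^d}\alpha(y+2\tau\theta)\,|(e^{i\tau\Delta/\rho}\varphi)(y)|^p(e^{i\tau\Delta/\rho}\varphi)(y)\,\overline{(e^{i\tau\Delta/\rho}\psi)(y)}\,dy\,d\tau.
\]
Multiplying by $\rho$ and sending $\rho\to\infty$, I would split the $\tau$-integral at $|\tau|=T$. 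On the compact piece, $\tau/\rho\to 0$ uniformly and the elevated regularity $\varphi,\psi\in X^{d/2+,\sigma}\hookrightarrow H^{d/2+}$ gives $e^{i\tau\Delta/\rho}\varphi\to\varphi$ in $L^\infty\cap L^2$ uniformly in $|\tau|\leq T$, so the $y$-integral passes to the limit. On the tail, Proposition~\ref{P:Enss} applied with $q=\sqrt{|\alpha|}$ (legitimate since \eqref{alpha-int} with $s<d/2$ gives $|q|\lesssim\langle x\rangle^{-s/2}$ and $s/2\in(0,d/2)$), combined with Cauchy–Schwarz in $y$, yields the uniform-in-$\rho$ bound
\[
\int_{|\tau|>T}\!\!\int_{\R^d}|\alpha(y+2\tau\theta)|\,|(e^{i\tau\Delta/\rho}\varphi)(y)|^{p+1}|(e^{i\tau\Delta/\rho}\psi)(y)|\,dy\,d\tau\lesssim\int_{|\tau|>T}\langle\tau\rangle^{-s}\,d\tau,
\]
which vanishes as $T\to\infty$ precisely because of the hypothesis $s>1$ in \eqref{alpha-int}.

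For the nonlinear error, I would use $\bigl||u|^p u-|u_L|^p u_L\bigr|\lesssim(|u|^p+|u_L|^p)|u-u_L|$ and run the weighted H\"older scheme that closes the contraction in Theorem~\ref{T:subcritical2}, absorbing the (finite) factor $\|\langle x\rangle^{(p+1)c\sigma}\alpha\|_{L^{2d(p+2)/(4-dp)}}$, to obtain
\[
|\mathrm{Err}|\lesssim\bigl(\|\langle x\rangle^{-c\sigma}u\|_{L^{p+2}_{t,x}}^p+\|\langle x\rangle^{-c\sigma}u_L\|_{L^{p+2}_{t,x}}^p\bigr)\,\|\langle x\rangle^{-c\sigma}(u-u_L)\|_{L^{p+2}_{t,x}}\,\|\langle x\rangle^{-c\sigma}u_L^\psi\|_{L^{p+2}_{t,x}}.
\]
By the linear estimate from the proof of Theorem~\ref{T:subcritical2}, each weighted norm of $u$, $u_L$, and $u_L^\psi$ is $\lesssim\rho^{-1/(p+2)}$. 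Writing $u-u_L$ via Duhamel and applying the same nonlinear weighted H\"older estimate yields $\|\langle x\rangle^{-c\sigma}(u-u_L)\|_{L^{p+2}_{t,x}}\lesssim\rho^{-(p+1)/(p+2)}$. Combining the factors gives $|\mathrm{Err}|\lesssim\rho^{-(2p+2)/(p+2)}$, which is $o(\rho^{-1})$ since $(2p+2)/(p+2)>1$ for every $p>0$.

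I expect the main obstacle to be the justification of the limit in the principal term: the tail estimate requires integrability of $\alpha$ along the ray $y+2\tau\theta$, which is exactly what \eqref{alpha-int} with $s>1$ and Proposition~\ref{P:Enss} are designed to deliver, while the strengthened regularity $\varphi,\psi\in X^{d/2+,\sigma}$ (beyond the $X^{1,\sigma}$ needed for the direct problem) is precisely what supplies the uniform $L^\infty$-bound and the pointwise convergence of the rescaled linear flows on compact time intervals. The error estimate is a direct reprise of the contraction argument and does not present new difficulties.
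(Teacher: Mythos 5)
Your proposal is correct and follows essentially the same route as the paper: the same main-term/error-term decomposition, the boost identity plus rescaling to produce the $\rho^{-1}$ factor, Proposition~\ref{P:Enss} together with $s>1$ to control the time integral, and the Duhamel/weighted-H\"older reprise of the contraction estimates giving $|\mathrm{Err}|\lesssim\rho^{-2(p+1)/(p+2)}=o(\rho^{-1})$. The only cosmetic differences are that the paper passes to the limit via dominated convergence with the majorant $\langle t\rangle^{-s}$ (obtained by placing all of $\alpha$ with the $\psi$ factor and using $H^{\frac{d}{2}+}\hookrightarrow L^{2(p+1)}$ for $\varphi$) rather than your split at $|\tau|=T$ with the $\sqrt{|\alpha|}$ Cauchy--Schwarz trick, which accomplish the same thing.
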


\begin{proof} We fix $\theta\in\mathbb{S}^{d-1}$, let $\rho\gg 1$, and set $v=\rho\theta$. Let $u_-=e^{iv\cdot x}\varphi\in A$, and let $u$ be the corresponding solution to \eqref{nls} constructed in Theorem~\ref{T:subcritical2}.  We begin by writing
\begin{align}
i\langle (S-&I)(e^{iv\cdot x}\varphi),e^{iv\cdot x}\psi\rangle \nonumber \\
& = \int_\R \langle \alpha |u|^p u, e^{it\Delta}e^{iv\cdot x}\psi\rangle \,dt \nonumber \\
& = \int_\R \langle\alpha |e^{it\Delta}e^{iv\cdot x}\varphi|^p e^{it\Delta}e^{iv\cdot x}\varphi,e^{it\Delta}e^{iv\cdot x}\psi\rangle \,dt \label{WMain1} \\
& \quad + \int_\R \langle \alpha[|u|^p u-|e^{it\Delta}e^{iv\cdot x}\varphi|^p e^{it\Delta}e^{iv\cdot x}\varphi],e^{it\Delta}e^{iv\cdot x}\psi\rangle\,dt. \label{WErr1} 
\end{align}
We will extract the main term from \eqref{WMain1} and estimate \eqref{WErr1} as an error term. 

Using \eqref{boost} and a change of variables, we first obtain
\begin{align}
\eqref{WMain1} & = \int_\R \langle \alpha(\cdot)\,|e^{it\Delta}\varphi(\cdot-2\rho\theta t)|^p e^{it\Delta}\varphi(\cdot-2\rho\theta t),e^{it\Delta}\psi(\cdot-2 \rho\theta t)\rangle \,dt \nonumber \\ 
& = \tfrac{1}{\rho}\int_\R \langle \alpha(\cdot+2\theta t)|e^{i\frac{t}{\rho}\Delta}\varphi|^p e^{i\frac{t}{\rho}\Delta}\varphi,e^{i\frac{t}{\rho}\Delta}\psi\rangle\,dt. \nonumber
\end{align}
We now define 
\begin{align*}
h_\rho(t) &= \langle \alpha(\cdot+2\theta t)|e^{i\frac{t}{\rho}\Delta}\varphi|^p e^{i\frac{t}{\rho}\Delta}\varphi,e^{i\frac{t}{\rho}\Delta}\psi\rangle, \\
\ell(t) &= \langle \alpha(\cdot + 2\theta t)|\varphi|^p \varphi,\psi\rangle.
\end{align*}
We will prove that for all $t\in\R$, we have
\begin{align}
&\lim_{\rho\to\infty}h_\rho(t) = \ell(t),\qtq{and} \label{hDCT1} \\
&|h_\rho(t)|\lesssim \langle t\rangle^{-s}\in L_t^1. \label{hDCT2}
\end{align}

To this end, first observe that
\begin{align}
|h_\rho(t)-\ell(t)| & \leq |\langle \alpha(\cdot+2\theta t)|e^{i\frac{t}{\rho}\Delta}\varphi|^p e^{i\frac{t}{\rho}\Delta}\varphi-|\varphi|^p \varphi,e^{i\frac{t}{\rho}\Delta}\psi\rangle|\label{Main1-Error1} \\
& \quad + |\langle \alpha(\cdot+2\theta t)|\varphi|^p \varphi,e^{i\frac{t}{\rho}\Delta}\psi - \psi\rangle|.\label{Main1-Error2}
\end{align}
To estimate these terms, we use the pointwise bound
\[
|e^{-i\tau|\xi|^2}-1|\leq |\tau|^{\frac12} |\xi|.
\]
In particular, using $H^{\frac{d}{2}+}\hookrightarrow L^{2(p+1)}$, 
\begin{align*}
\eqref{Main1-Error2} & \leq (\tfrac{|t|}{\rho})^{\frac12}  \|\alpha\|_{L^\infty} \|\varphi\|_{L^{2(p+1)}}^{p+1} \|\nabla\psi\|_{L^2} \to 0 \qtq{as}\rho\to\infty.
\end{align*}
Similarly, using Sobolev embedding to control the free evolution in $L^\infty$, 
\begin{align*}
\eqref{Main1-Error1} & \leq \|\alpha\|_{L^\infty}\{\| e^{i\frac{t}{\rho}\Delta}\varphi\|_{L^\infty}^p+\|\varphi\|_{L^\infty}^p\}\|e^{i\frac{t}{\rho}\Delta}\varphi-\varphi\|_{L^2}\|e^{i\frac{t}{\rho}\Delta}\psi\|_{L^2} \\
& \lesssim (\tfrac{|t|}{\rho})^{\frac12}\|\alpha\|_{L^\infty}\|\varphi\|_{H^{\frac{d}{2}+}}^p\|\nabla\varphi\|_{L^2}\|\psi\|_{L^2} \to 0 \qtq{as}\rho\to\infty. 
\end{align*}
This proves \eqref{hDCT1}.

Next, we use \eqref{boost}, H\"older's inequality, Sobolev embedding, and Proposition~\ref{P:Enss} (in the form \eqref{PETS}), to obtain
\begin{align*}
|h_\rho(t)| &= |\langle \alpha \,|e^{i\frac{t}{\rho}\Delta} e^{i\theta\cdot x}\varphi|^p e^{i\frac{t}{\rho}\Delta} e^{i\theta\cdot x}\varphi, e^{i\frac{t}{\rho}\Delta}e^{i\theta\cdot x}\psi\rangle| \\
& \lesssim \|e^{i\frac{t}{\rho}\Delta}\varphi\|_{L^{2(p+1)}}^{p+1}\| \alpha e^{i\frac{t}{\rho}\Delta}e^{i\theta\cdot x}\psi \|_{L^2} \\
& \lesssim \|\varphi\|_{H^{\frac{d}{2}+}}^{p+1}\|\alpha(\cdot+2\tfrac{t}{\rho}v)e^{i\frac{t}{\rho}\Delta}\psi\|_{L^2}  \lesssim \langle t\rangle^{-s},
\end{align*}
which proves \eqref{hDCT2}.

By the dominated convergence theorem, we therefore obtain 
\[
\lim_{\rho\to\infty} \rho\cdot\eqref{WMain1} = \int_\R \langle \alpha(\cdot+2\theta t)|\varphi|^p\varphi,\psi\rangle\,dt,
\]
which yields the main term. 

To complete the proof, it remains to prove that
\[
|\eqref{WErr1}| = o(\rho^{-1})\qtq{as}\rho\to\infty.
\]
We begin with the estimate
\begin{align*}
|\eqref{WErr1}| & \lesssim \|\alpha [e^{it\Delta} e^{iv\cdot x}\psi][|u|^p + |e^{it\Delta}e^{iv\cdot x}\varphi|^p]\,[u-e^{it\Delta}e^{iv\cdot x}\varphi]\|_{L_{t,x}^1} \\
& \lesssim \biggl\| \alpha [e^{it\Delta} e^{iv\cdot x}\psi][|u|^p + |e^{it\Delta}e^{iv\cdot x}\varphi|^p]\,\biggl[\int_{-\infty}^t e^{i(t-s)\Delta}\alpha |u|^p u(s)\,ds\biggr]\biggr\|_{L_{t,x}^1}.
\end{align*}

We utilize Strichartz and the estimates appearing in the proof of Theorem~\ref{T:subcritical2} to obtain
\begin{align*}
|\eqref{WErr1}| & \lesssim \|\alpha[e^{it\Delta}e^{iv\cdot x}\psi][|u|^p +|e^{it\Delta} e^{iv\cdot x}\varphi|^p]\|_{L_t^{q'} L_x^{r'}} \| \alpha |u|^p u\|_{L_t^{q'} L_x^{r'}} \\
& \lesssim \|\langle x\rangle^{(p+1)c\sigma}\alpha\|_{L^{\frac{2d(p+2)}{4-dp}}}^2 \|\langle x\rangle^{-c\sigma} e^{it\Delta} e^{iv\cdot x}\psi\|_{L_{t,x}^{p+2}} \\
& \quad \times \bigl[\|\langle x\rangle^{-c\sigma}u\|_{L_{t,x}^{p+2}}^p + \| \langle x\rangle^{-c\sigma} e^{it\Delta}e^{iv\cdot x}\varphi\|_{L_{t,x}^{p+2}}^p\bigr] \|\langle x\rangle^{-c\sigma} u\|_{L_{t,x}^{p+2}}^{p+1} \\
& \lesssim \rho^{-2\frac{p+1}{p+2}} = o(\rho^{-1})\qtq{as}\rho\to\infty,
\end{align*}
as was needed to show. \end{proof}

\begin{corollary}\label{C2} Let $d\geq 3$ and suppose $(p,\alpha), (\tilde p,\tilde \alpha)$ satisfy the assumptions of Theorem~\ref{T:subcritical2} and Theorem~\ref{T:recovery2}.  Let $S:A\to L^2$ and $\tilde S:\tilde A\to L^2$ denote the corresponding scattering maps.

If $S(f)=\tilde S(f)$ for all $f\in A\cap\tilde A$, then $p=\tilde p$ and $\alpha=\tilde \alpha$. 
\end{corollary}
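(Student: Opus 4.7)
The plan is to follow the template of Corollary~\ref{C1}: use Theorem~\ref{T:recovery2} to extract line integrals of $\alpha$ and $\tilde\alpha$ from the agreement $S=\tilde S$, recover $p$ via a scaling argument, and then recover $\alpha$ via injectivity of the X-ray transform.

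To begin, fix $\varphi,\psi\in\mathcal{S}(\R^d)$ (which embeds continuously in $X^{\frac{d}{2}+,\sigma}$). For each $\lambda>0$ and $\theta\in\mathbb{S}^{d-1}$, choose $\rho$ large enough (depending on $\lambda,\varphi,\alpha,\tilde\alpha$) so that $e^{i\rho\theta\cdot x}(\lambda\varphi)\in A\cap\tilde A$; this is possible by the description of $A$ given after Theorem~\ref{T:subcritical2}. The hypothesis $S=\tilde S$ and Theorem~\ref{T:recovery2}, applied in turn to $(p,\alpha)$ and $(\tilde p,\tilde\alpha)$, then yield
\[
\lambda^{p+1}\int_\R\langle\alpha(\cdot+2t\theta)|\varphi|^p\varphi,\psi\rangle\,dt
= \lambda^{\tilde p+1}\int_\R\langle\tilde\alpha(\cdot+2t\theta)|\varphi|^{\tilde p}\varphi,\psi\rangle\,dt
\]
for every $\lambda>0$ and every pair of Schwartz $\varphi,\psi$.

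Assuming $\alpha\not\equiv 0$, I pick $\varphi,\psi$ localized near a point where $\alpha\neq 0$ and a suitable $\theta$ making the left-hand integral nonzero; matching the $\lambda$-homogeneity of the two sides then forces $p=\tilde p$. (The degenerate case $\alpha\equiv 0$ is absorbed by the X-ray argument below, which then directly produces $\tilde\alpha\equiv 0$.) With $p=\tilde p$ established, set $\beta=\alpha-\tilde\alpha$. By Fubini---justified since $|\beta(x)|\lesssim\langle x\rangle^{-s}$ with $s>1$ by \eqref{alpha-int}, making $\beta$ integrable along every affine line---the identity rearranges to
\[
\int_{\R^d}\biggl[\int_\R\beta(x+2t\theta)\,dt\biggr]|\varphi(x)|^p\varphi(x)\,\overline{\psi(x)}\,dx=0.
\]

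Varying $\psi\in\mathcal{S}$ and using density in $L^2$ shows that the bracketed X-ray integral $(R_\theta\beta)(x):=\int_\R\beta(x+2t\theta)\,dt$ vanishes on the open set $\{\varphi\neq 0\}$; varying $\varphi$ over Schwartz bumps centered at arbitrary $x_0\in\R^d$ then extends the vanishing to all of $\R^d$ for every $\theta\in\mathbb{S}^{d-1}$. Since $\beta$ is continuous with integrable line decay, the Fourier slice theorem (the restriction of $\hat\beta$ to every hyperplane through the origin equals, up to constants, the Fourier transform of $R_\theta\beta$ and hence vanishes) gives $\hat\beta\equiv 0$, so $\beta\equiv 0$. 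The main point requiring care is the justification of the Fubini manipulation and the density/bump argument under the regularity assumed on $\alpha,\tilde\alpha$; the decay from \eqref{alpha-int} with $s>1$ makes this routine, so the proof reduces to the standard injectivity of the X-ray transform on continuous, line-integrable functions.
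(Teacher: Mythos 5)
Your argument is correct and follows essentially the same route as the paper: determine $p$ by exploiting the $\lambda^{p+1}$-homogeneity of the limit in Theorem~\ref{T:recovery2} (the paper does this by comparing the data $\varphi$ and $2\varphi$ via a ratio), then reduce to the statement that all line integrals of $\alpha-\tilde\alpha$ vanish and invoke injectivity of the X-ray transform. The only cosmetic difference is the localization step: you apply Fubini first and then vary $\varphi,\psi$ over bumps and use density, whereas the paper takes $\psi=\varphi$ and runs an approximate-identity/dominated-convergence argument; both hinge on the same integrability $\int_\R\langle x+2t\theta\rangle^{-s}\,dt<\infty$ supplied by \eqref{alpha-int}.
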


\begin{proof} Fix $\varphi\in\mathcal{S}$ and $\theta\in\mathbb{S}^{d-1}$. Then Theorem~\ref{T:recovery2} implies
\[
\frac{\langle (S-I)(2e^{i\rho\theta\cdot x}\varphi),e^{i\rho\theta\cdot x}\varphi\rangle}{\langle (S-I)(e^{i\rho\theta\cdot x}\varphi),e^{i\rho\theta\cdot x}\varphi\rangle}\to 2^{p+1}\qtq{as}\rho\to\infty. 
\]
Thus if $S=\tilde S$, we first obtain $p=\tilde p$.  

Applying Theorem~\ref{T:recovery2} once again, we obtain
\begin{equation}\label{WAF1}
\int_\R\int_{\R^d}  \alpha(x+2\theta t)|\varphi(x)|^{p+2}\,dx\,dt = \int_\R\int_{\R^d} \tilde\alpha(x+2\theta t)|\varphi(x)|^{p+2}\,dx\,dt
\end{equation}
for all $\theta\in\mathbb{S}^{d-1}$ and $\varphi\in X^{\frac{d}{2}+,\sigma}$.

We now fix $\theta\in\mathbb{S}^{d-1}$ and $y\in\R^d$.  We then choose a nonnegative, compactly supported $\varphi\in L^1$ with $\int \varphi = 1$ and set $\varphi_n(x) = n^d \varphi(nx)$.  By \eqref{WAF1}, we have
\begin{equation}\label{Weqn}
\iint \alpha(x+2\theta t)\varphi_n(x-y)\,dx\,dt = \iint\tilde\alpha(x+2\theta t)\varphi_n(x-y)\,dx\,dt \qtq{for all $n$.} 
\end{equation}

Now consider the functions
\[
g_n(t) = \int_{\R^d} \alpha(x+2\theta t)\varphi_n(x-y)\,dx. 
\]
By approximate identity arguments, we have that 
\[
g_n(t)\to \alpha(y+2\theta t)\qtq{as} n\to\infty\qtq{for all}t\in\R.
\]
Furthermore, recalling \eqref{alpha-int} and noting that $|x-y|\lesssim 1$ on the support of $\varphi_n(x-y)$, we have
\[
|g_n(t)|  \lesssim \int \langle x+2\theta t\rangle^{-s} \varphi_n(x-y)\,dx \lesssim \int h(t)\varphi_n(x-y)\,dx \lesssim h(t),
\]
where $h\in L_t^1$ is defined by
\[
h(t):=\begin{cases} 1 & |t|\lesssim |y| \\ \langle t\rangle^{-s} & |t|\gg |y|.\end{cases}
\]
Thus, by the dominated convergence theorem, we have
\[
\int_\R\int_{\R^d}\alpha(x+2\theta t)\varphi_n(x-y)\,dx\,dt \to \int_\R \alpha(y+2\theta t)\,dt \qtq{as}n\to\infty. 
\]

Arguing similarly for $\tilde\alpha$ and recalling \eqref{Weqn}, we deduce
\[
\int_\R \alpha(y+\theta t)\,dt = \int_\R\tilde \alpha(y+\theta t)\,dt.
\]
As $\theta\in\mathbb{S}^{d-1}$ and $y\in\R^d$ were arbitrary, the fact that $\alpha=\tilde\alpha$ now follows from the injectivity of the X-ray transform (see e.g. \cite[Chapter I]{Helgason}). \end{proof}

\subsection{Challenges in the mass-supercritical regime}  The approach taken in Theorem~\ref{T:subcritical2} and Theorem~\ref{T:recovery2} is to formulate the scattering problem as a small-data problem, capitalizing on the fact that highly boosted data become small in weighted spaces (a consequence of Proposition~\ref{P:Enss}).  This construction guarantees that the corresponding nonlinear solutions inherit the weighted estimates enjoyed by the boosted linear solutions.  Such estimates then play an essential role in the proof of Theorem~\ref{T:recovery2}, particularly in the estimation of the error term \eqref{WErr1}. 

Extending this approach to the mass-supercritical regime seems to lead to some significant difficulties.  Indeed, in this setting the small-data contraction mapping argument to construct the scattering solutions requires some estimates on the derivatives of solutions; however, the derivatives of highly boosted data will become very large. Thus, while it seems possible to extend Theorem~\ref{T:subcritical2} and Theorem~\ref{T:recovery2} into the \emph{slightly} mass-supercritical regime, the full intercritical and energy-critical regime appear to be out of reach for now. 

In \cite{Watanabe}, Watanabe proceeded by imposing a positivity and repulsivity condition on the coefficient $\alpha$, which allowed for the use of Morawetz estimates to establish an intercritical scattering theory for \eqref{nls} for arbitrarily large $H^1$ data (including boosted data).  As in the proof of Theorem~\ref{T:recovery2}, the recovery problem subsequently required the estimation of an error term like \eqref{WErr1}.  The approach of \cite{Watanabe} was based on the intertwining property and an implicit formula for the wave operator $\Omega_-$; however, it appears that the formula for $\Omega_-$ in \cite[Lemma~3.2]{Watanabe} is missing a factor of $\Omega_-$ in the nonlinear term.  In the absence of this factor, one is ultimately faced with estimating only a \emph{linear} term, for which an estimate such as Proposition~\ref{P:Enss} is sufficient.  Restoring the missing factor of $\Omega_-$, one is instead led to a term involving the full (nonlinear) solution.  It then seems necessary to prove that even in this setting, the scattering solutions inherit the weighted estimates satisfied by the boosted linear solutions.  At present, the author is not aware of a method to obtain such estimates in the intercritical setting.  On the other hand, Theorem~\ref{T:subcritical2} and Theorem~\ref{T:recovery2} demonstrate that in the mass-critical and mass-subcritical regime, the scattering problem does admit a formulation as a small-data problem that is well-adapted to the approach found in \cite{Watanabe}.

\end{document}